\newcommand{\Nbb}{\mathbb N}
\newcommand{\Qbb}{\mathbb Q}
\newcommand{\Rbb}{\mathbb R}
\newcommand{\Fcal}{\mathcal F}
\newcommand{\Kcal}{\mathcal K}
\newcommand{\Mcal}{\mathcal M}
\newcommand{\Pcal}{\mathcal P}
\newcommand{\Tcal}{\mathcal T}
\newcommand{\Ucal}{\mathcal U}
\newcommand{\Cscr}{\mathscr C}
\newcommand{\Pscr}{\mathscr P}
\renewcommand{\to}{\rightarrow}
\newcommand{\normal}{\vartriangleleft}
\newcommand{\dd}{\:\mathrm{d}}
\DeclareMathOperator{\cl}{c\ell}
\DeclareMathOperator{\supp}{supp}
\DeclareMathOperator{\conv}{conv}
\DeclareMathOperator*{\plim}{\mathit{p}-lim}
\DeclareMathOperator*{\qlim}{\mathit{q}-lim}
\newcommand{\Linfty}{L_\infty(G)}
\newcommand{\LinftyS}{L_\infty^*(G)}
\newcommand{\Lone}{L_1(G)}
\newcommand{\Ltwo}{L_2(G)}
\newcommand{\VN}{VN(G)}
\newcommand{\AG}{A(G)}
\newcommand{\Ghat}{\widehat{G}}
\newcommand{\cf}{{\it c.f.}}
\DeclareMathOperator{\TLIM}{TLIM}
\DeclareMathOperator{\TRIM}{TRIM}
\DeclareMathOperator{\TIM}{TIM}
\DeclareMathOperator{\FC}{FC}
\theoremstyle{definition}
\newtheorem{Theorem}{Theorem}[section]
\newtheorem{Lemma}[Theorem]{Lemma}
\newtheorem{Corollary}[Theorem]{Corollary}
\newtheorem{point}[Theorem]{}
\def\chaptermark#1{}
\def\chapter{%
  \thispagestyle{plain}\global\@topnum\z@
  \@afterindenttrue \secdef\@chapter\@schapter}
\def\@chapter[#1]#2{\refstepcounter{chapter}%
  \ifnum\c@secnumdepth<\z@ \let\@secnumber\@empty
  \else \let\@secnumber\thechapter \fi
  \typeout{\chaptername\space\@secnumber}%
  \def\@toclevel{0}%
  \ifx\chaptername\appendixname \@tocwriteb\tocappendix{chapter}{#2}%
  \else \@tocwriteb\tocchapter{chapter}{#2}\fi
  \chaptermark{#1}%
  \addtocontents{lof}{\protect\addvspace{10\p@}}%
  \addtocontents{lot}{\protect\addvspace{10\p@}}%
  \@makechapterhead{#2}\@afterheading}
\def\@schapter#1{\typeout{#1}%
  \let\@secnumber\@empty
  \def\@toclevel{0}%
  \ifx\chaptername\appendixname \@tocwriteb\tocappendix{chapter}{#1}%
  \else \@tocwriteb\tocchapter{chapter}{#1}\fi
  \chaptermark{#1}%
  \addtocontents{lof}{\protect\addvspace{10\p@}}%
  \addtocontents{lot}{\protect\addvspace{10\p@}}%
  \@makeschapterhead{#1}\@afterheading}
\newcommand\chaptername{Chapter}
\def\@makechapterhead#1{\global\topskip 7.5pc\relax
  \begingroup
  \fontsize{\@xivpt}{18}\bfseries\centering
     \Roman{chapter}. #1\par \endgroup
  \skip@22\p@ \advance\skip@-\normalbaselineskip
  \vskip\skip@ }
\def\@makeschapterhead#1{\global\topskip 7.5pc\relax
  \begingroup
  \fontsize{\@xivpt}{18}\bfseries\centering
  #1\par \endgroup
  \skip@34\p@ \advance\skip@-\normalbaselineskip
  \vskip\skip@ }
\def\appendix{\par
  \c@chapter\z@ \c@section\z@
  \let\chaptername\appendixname
  \def\thechapter{\@Alph\c@chapter}}
\newcounter{chapter}
\newif\if@openright
\begin{document}
\title{Counting topologically invariant means on $L_\infty(G)$ and $VN(G)$ with ultrafilters}
\author{John Hopfensperger}
\thanks{The present paper will form part of the author's PhD thesis under supervision of Ching Chou.}
\address{\hskip-\parindent
Department of Mathematics, University at Buffalo,
Buffalo, NY 14260-2900, USA}
\email{johnhopf@buffalo.edu}
\keywords{Amenable groups; Invariant means; FC groups; Fourier algebras; Ultrafilters; Cardinality}
\begin{abstract}
In 1970, Chou showed there are $|\Nbb^*| = 2^{2^\mathbb{N}}$ topologically invariant means on $L_\infty(G)$ for any noncompact, $\sigma$-compact amenable group.
Over the following 25 years, the sizes of the sets of topologically invariant means on $L_\infty(G)$ and $VN(G)$ were determined for any locally compact group.
Each paper on a new case reached the same conclusion -- ``the cardinality is as large as possible'' -- but a unified proof never emerged.
In this paper, I show $L_1(G)$ and $\AG$ always contain orthogonal nets converging to invariance.
An orthogonal net indexed by $\Gamma$ has $|\Gamma^*|$ accumulation points, where $|\Gamma^*|$ is determined by ultrafilter theory.

Among a smattering of other results, I prove Paterson's conjecture that left and right topologically invariant means on $L_\infty(G)$ coincide iff $G$ has precompact conjugacy classes.
\end{abstract}
\maketitle
\chapter{Background}
\section{History}
\begin{point}
F\o{}lner's condition for amenable discrete groups says,
for all finite $K\subset G$ and $\epsilon > 0$ there exists finite $F_{(K,\epsilon)}\subset G$ which is $(K,\epsilon)$-invariant.
The set $\Gamma$ of all ordered pairs $\gamma = (K, \epsilon)$ is a directed set, ordered by increasing $K$ and decreasing $\epsilon$.
Define $m_\gamma\in\ell_\infty^*(G)$ by $m_\gamma(f) = \frac{1}{|F_\gamma|} \sum_{x\in F_\gamma} f(x)$.
Then the net $\{m_\gamma\}_{\gamma\in\Gamma}$ converges to invariance, and any limit point is an invariant mean on $\ell_\infty(G)$.

The analogue of F\o{}lner's condition for locally compact amenable groups says,
for all compact $K\subset G$ and $\epsilon > 0$, there exists compact $F\subset G$ which is $(K,\epsilon)$-invariant.
The analogous definition of $m_\gamma\in\LinftyS$ is $m_\gamma(f) = \frac{1}{|F_\gamma|} \int_{F_\gamma} f(x) \dd{x}$.
If $m$ is any limit point of $\{m_\gamma\}_{\gamma\in\Gamma}$, it is not only invariant but topologically invariant.
That is, $m(\phi) = m(f*\phi)$ for any $\phi\in\Linfty$ and $f\in \Lone$ with $\|f\|_1 = \int_G f = 1$.
\end{point}

\begin{point}
Non-topologically invariant means are harder to come by.
All invariant means on a discrete group are topologically invariant, and
it was not until 1972 that \cite{Rudin72} and \cite{Granirer} independently discovered a construction of non-topologically invariant means for any non-discrete $G$ that is amenable-as-discrete.
Several papers have discussed the implications of this construction.
Notably, \cite{Rosenblatt76} combined it with a Baire category argument to construct $2^{2^\Nbb}$ invariant means that are not topologically invariant.

The general problem of enumerating non-topologically invariant means seems intractable.
For instance, the famously difficult Banach-Ruziewicz problem boils down to whether $SO(n,\Rbb)$ admits any non-topologically invariant means, \cf\ \cite[Proposition 1.3]{AllStars}.
\end{point}

\begin{point}
Let $\TLIM(G)$ denote the topologically left-invariant means on $\Linfty$, and $\TIM(G)$ the (two-sided) topologically invariant means.
In \cite{patersonFCBar}, Paterson conjectured that $\TLIM(G) = \TIM(G)$ iff $G$ has precompact conjugacy classes, and proved it assuming $G$ is compactly generated.
The short and insightful paper \cite{milnes} proved Paterson's conjecture assuming $G$ is $\sigma$-compact, and gave me the ideas to prove it in full generality.
\end{point}

\begin{point}
For any set $S$, let $|S|$ denote its cardinal number -- that is, the first ordinal $\alpha$ such that there exists a bijection from $\alpha$ to $S$.
(There exists an ordinal of each cardinality by the axiom of choice.)
By definition, an ordinal is the set of all previous ordinals.
Thus the cardinal usually called $\aleph_0$ is none other than $\Nbb = \{0,1,\hdots\}$.
\end{point}

\begin{point}\label{Intro_Number_of_Tims}
Let $\kappa = \kappa(G)$ be the first ordinal such that there is a family $\Kcal$ of compact subsets of $G$ with $|\Kcal| = \kappa$ and $G = \bigcup \Kcal$.
It's not hard to prove $|\TIM(G)| \leq |\TLIM(G)| \leq 2^{2^\kappa}$.
Of course when $\kappa = 1$, the unique topologically invariant mean is Haar measure.
But when $\kappa \geq \Nbb$, $|\TIM(G)|$ actually equals $2^{2^\kappa}$.
Here is an abbreviated history of this surprising result, which took almost 20 years to establish:

When $\kappa=\Nbb$, \cite{Chou70} defined $\pi: \Linfty\to\ell_\infty(\Nbb)$ by $\pi(f)(n) = \frac{1}{|U_n|}\int_{U_n} f$, where $\{U_n\}$ is a F\o{}lner sequence of mutually disjoint sets.
Thus $\pi^* : \ell_\infty^*(\Nbb) \to \LinftyS$ is an embedding.
Let $c_0 = \{f\in \ell_\infty(\Nbb) : \lim_n f(n) = 0\}$ and $\Fcal = \{m\in\ell_\infty^*(\Nbb) : \|m\| = 1, m \geq 0, m|_{c_0} \equiv 0\}$, so that $\pi^*[\Fcal]\subset\TLIM(G)$.
Regarding the nonprincipal ultrafilters on $\Nbb$ as elements of $\Fcal$, we see $|\TLIM(G)| \geq |\beta\Nbb - \Nbb| = 2^{2^\Nbb}$.

When $G$ is discrete and $\kappa \geq \Nbb$, \cite{Chou76} proved $|\TIM(G)| \geq 2^{2^\kappa}$.
When $G$ is non-discrete and $\kappa \geq \Nbb$, \cite{lau-paterson} proved $|\TLIM(G)| \geq 2^{2^\kappa}$.
These papers take the more direct approach of constructing disjoint, translation-invariant subsets of $\beta G$.

The full result $|\TIM(G)| \geq 2^{2^\kappa}$ was finally proved by \cite{Yang}.
Yang realized that the trick to generalizing Chou's embedding argument is to replace $\Nbb$ by the indexing set of a F\o{}lner net.
\end{point}

\begin{point}
When $G$ is discrete abelian, $\kappa(\Ghat) = 1$ and Haar measure is the unique topologically invariant mean on $L_\infty(\Ghat)$.
More generally, let $\mu = \mu(G)$ be the first ordinal such that $G$ has a neighborhood basis $\Ucal$ at the origin with $|\Ucal| = \mu$.
When $G$ is abelian, $\kappa(\Ghat) = \mu(G)$ by \cite[(24.48)]{HR1}.
Thus when $G$ is non-discrete abelian, $\kappa(\Ghat) \geq \Nbb$ and $|\TIM(\Ghat)| = 2^{2^\mu}$.

When $G$ is non-abelian, the group von Neumann algebra $\VN$ is the natural analogue of $L_\infty(\Ghat)$.
\\If $\TIM(\Ghat)$ denotes the set of topologically invariant means on $\VN$, then the analogous results hold:
\\When $\mu = 1$, $\TIM(\Ghat)$ is the singleton comprising the point-measure $\delta_e$, as proved by \cite{renaud}.
\\When $\mu \geq \Nbb$, $|\TIM(\Ghat)| = 2^{2^\mu}$.
This is proved by \cite{Chou82} when $\mu = \Nbb$, using an embedding $\pi^*: \ell_\infty^*(\Nbb) \to \VN^*$,
and by \cite{Hu95} when $\mu > \Nbb$, using a family $\{\pi_\gamma^*: \ell_\infty^*(\mu) \to \VN^*\}_{\gamma<\mu}$ of embeddings!
\end{point}


\section{Ultrafilters}
The following exposition is sparse. For more details, see \cite[Chapter 3]{hindman}.

\begin{point}
Suppose $\{x_\gamma\} = \{x_\gamma\}_{\gamma\in\Gamma}$ is an infinite subset of the compact Hausdorff space $X$.
Obviously $\{x_\gamma\}$ has limit points in $X$, but how do we ``name'' them?
Regarding $\Gamma$ as a discrete topological space, let $f: \Gamma \to X$ be the continuous function $\gamma \mapsto x_\gamma$.
Let $\tilde{f}: \beta\Gamma \to X$ be the unique continuous extension of $f$ to the Stone-\v{C}ech compactification of $\Gamma$.
Since $\tilde{f}[\beta\Gamma]$ is compact and has $f[\Gamma]$ as a dense subset,
the limit points of $\{x_\gamma\}$ are precisely the points $\tilde{f}(p)$ with $p\in \beta\Gamma$.
We usually write $\plim_\gamma x_\gamma$ instead of $\tilde{f}(p)$.
\end{point}

\begin{point}
In functional analysis, the Stone-\v{C}ech compactification $\beta\Gamma$ of a completely regular space $\Gamma$ is realized as the Gelfand spectrum of the Banach algebra $C(\Gamma)$.
In particular, suppose $\Gamma$ is discrete, and $p\in \beta\Gamma$ is a nonzero multiplicative functional on $C(\Gamma) = \ell_\infty(\Gamma)$.
For any $S\subset\Gamma$, we have $\langle p, 1_S\rangle = \langle p, 1_S\cdot 1_S\rangle = \langle p, 1_S\rangle^2$.
Thus $p$ may be regarded as a $\{0,1\}$-valued measure on $\Gamma$.

But, by definition, an ultrafilter on $\Gamma$ is nothing more than a set of the form $\{S\subset \Gamma : p(S) = 1\}$ where $p$ is a nonzero $\{0,1\}$-valued measure.
(Equivalently, it is a maximal collection of subsets of $\Gamma$ that is closed under finite intersections and does not contain $\varnothing$.)
In this way, the combinatoric construction of $\beta\Gamma$ as the set of all ultrafilters on $\Gamma$ is identical to the Gelfand spectrum construction.
\end{point}

\begin{point}
In functional analysis, we embed $\Gamma$ in $\beta\Gamma$ by sending $\gamma$ to the point-measure $\delta_\gamma$, which corresponds to the principal ultrafilter $p_\gamma = \{S\subset\Gamma : \gamma\in S\}$.
The defining feature of a principal ultrafilter is that its smallest element has cardinality 1.
For any cardinal $\kappa$, we can define the $\kappa$-uniform ultrafilters as those whose smallest elements have cardinality $\kappa$.
As we shall see, there are $2^{2^{|\Gamma|}}$ $|\Gamma|$-uniform ultrafilters on $\Gamma$.
Of course, since $\beta\Gamma\subset \Pscr(\Pscr(\Gamma))$, there can't be more than $2^{2^{|\Gamma|}}$ ultrafilters in total.
\end{point}

\begin{point}
Consider the case when $\Gamma$ is a directed set.
Let $\Gamma^*$ denote the set of ultrafilters in $\beta\Gamma$ that include every tail $T_\alpha = \{\gamma\in\Gamma : \gamma > \alpha\}$,
and suppose $|T_\alpha| = |\Gamma| = \kappa$ for each $\alpha \in \Gamma$.
Notice that any finite intersection $T_{\gamma_1} \cap \hdots \cap T_{\gamma_n}$ contains $T_\alpha$, where $\alpha \geq \gamma_1, \hdots, \gamma_n$, hence $|T_{\gamma_1} \cap \hdots \cap T_{\gamma_n}| = \kappa$.
\end{point}

\begin{Lemma} \label{Lemma Cardinality of Gamma*}
$\Gamma^*$ has cardinality $2^{2^\kappa}$, the same as $\beta\Gamma$.
\end{Lemma}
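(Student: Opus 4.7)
The plan is to establish the two bounds separately. The upper bound $|\Gamma^*| \leq 2^{2^\kappa}$ is immediate, as $\Gamma^* \subseteq \beta\Gamma \subseteq \Pscr(\Pscr(\Gamma))$, and the same containment gives $|\beta\Gamma|\leq 2^{2^\kappa}$.

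For the lower bound I would adapt the classical Hausdorff/Pospi\v{s}il argument that $|\beta\Nbb\setminus\Nbb| = 2^{2^\Nbb}$. First I construct an auxiliary function $\Phi\colon\Gamma\to\kappa$ satisfying $\Phi[T_\alpha] = \kappa$ for every $\alpha\in\Gamma$, by transfinite recursion. Enumerate $\Gamma\times\kappa = \{(\alpha_\eta,\beta_\eta) : \eta<\kappa\}$, and at stage $\eta$ pick any $\gamma\in T_{\alpha_\eta}$ on which $\Phi$ is not yet defined---possible because $|T_{\alpha_\eta}|=\kappa$ while only $|\eta|<\kappa$ elements have been assigned---and set $\Phi(\gamma)=\beta_\eta$. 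Extend $\Phi$ arbitrarily to any remaining elements of $\Gamma$.

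With $\Phi$ in hand, I invoke Hausdorff's classical theorem to obtain an independent family $\Fcal\subseteq\Pscr(\kappa)$ of cardinality $2^\kappa$ whose finite Boolean combinations all have cardinality $\kappa$. For each $f\colon\Fcal\to\{0,1\}$, consider
\[
\Bcal_f := \{\Phi^{-1}(A) : f(A)=1\} \cup \{\Gamma\setminus\Phi^{-1}(A) : f(A)=0\} \cup \{T_\alpha : \alpha\in\Gamma\}.
\]
Using the directedness of $\Gamma$, any finite intersection of members of $\Bcal_f$ reduces to $\Phi^{-1}(C)\cap T_\alpha$ for some nonempty Boolean combination $C\subseteq\kappa$ of members of $\Fcal$. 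Since $\Phi[T_\alpha] = \kappa \supseteq C$, this intersection is nonempty, so $\Bcal_f$ has the finite intersection property and extends to an ultrafilter $p_f \in\Gamma^*$. Distinct $f$ produce distinct $p_f$ (they disagree on some $\Phi^{-1}(A)$ versus its complement), yielding $2^{2^\kappa}$ elements of $\Gamma^*$.

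I expect the only delicate step to be the construction of $\Phi$, which is where the hypothesis $|T_\alpha|=\kappa$ is essential; the rest is a bookkeeping check that Hausdorff's independent family on $\kappa$ pulls back through $\Phi^{-1}$ to a family still compatible with the tail filter on $\Gamma$.
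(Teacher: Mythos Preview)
Your argument is correct: the transfinite construction of $\Phi$ with $\Phi[T_\alpha]=\kappa$ is sound (since $|\eta|<\kappa$ at each stage and $|T_{\alpha_\eta}|=\kappa$), and pulling back a Hausdorff independent family through $\Phi^{-1}$ yields $2^{2^\kappa}$ ultrafilters refining the tail filter, exactly as claimed. A minor imprecision: when the finite subcollection of $\Bcal_f$ contains only tails (or only $\Phi^{-1}$-sets), the reduction to ``$\Phi^{-1}(C)\cap T_\alpha$'' needs the trivial convention $C=\kappa$ (respectively, any $\alpha$), but this does not affect the argument.

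The paper's proof is simply a citation of \cite[Theorem~3.62]{hindman}, which states the more general fact that any family $\Tcal$ of at most $\kappa$ subsets of $\Gamma$ with $|\bigcap F|=\kappa$ for finite $F\subset\Tcal$ is contained in $2^{2^\kappa}$ $\kappa$-uniform ultrafilters. Your proof is essentially the standard independent-family argument underlying that theorem, specialized to the tail filter; so the approaches coincide, yours being self-contained where the paper defers to the reference.
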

\begin{proof}
This follows immediately from \cite[Theorem 3.62]{hindman}, which actually says something a bit stronger:
\\Let $\Gamma$ be an infinite set with cardinality $\kappa$, and let $\Tcal$ be a collection of at most $\kappa$ subsets of $\Gamma$ such that $|\bigcap F| = \kappa$ for any finite $F \subset \Tcal$.
Then there are $2^{2^\kappa}$ $\kappa$-uniform ultrafilters containing $\Tcal$.
\end{proof}
\section{Projections and Means}
\begin{point}\label{Definition of Support}
A positive unital functional on a von Neumann algebra $X$ is called a state.
A state $u$ is called normal if $\langle u, \sup_\alpha P_\alpha\rangle = \sup_\alpha \langle u, P_\alpha\rangle$ for any family $\{P_\alpha\}\subset X$ of (orthogonal) projections.
Let $\Pcal_1$ denote the set of normal states on $X$.
Sakai famously proved that the linear span of $\Pcal_1$ forms the predual of $X$ -- that is, $\mathrm{span}(\Pcal_1)^* = X$.
For each $u\in \Pcal_1$, we can define a projection $S(u)$ called the support of $u$, which is the inf of all projections $P$ such that $\langle P, u\rangle = \langle I, u\rangle = 1$.
Now $u,v\in \Pcal_1$ are called orthogonal if $S(u) S(v) = 0$.
\end{point}

\begin{Lemma}\label{Lemma Orthogonal Implies Injection}
Suppose $\{u_\gamma\}_{\gamma\in\Gamma} \subset \Pcal_1$ are mutually orthogonal.
In other words, $S(u_\gamma) S(u_\beta) = 0$ when $\gamma\neq\beta$.
Then the map $\beta\Gamma \to X^*$ given by $p\mapsto \plim_\gamma u_\gamma$ is one-to-one.
\end{Lemma}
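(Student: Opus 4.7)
The plan is to show that for any two distinct ultrafilters $p,q \in \beta\Gamma$ there is a single element of $X$ which $p$-lim and $q$-lim distinguish. Since ultrafilters on $\Gamma$ are characterized by the $\{0,1\}$-valued measures they induce on $\Pcal(\Gamma)$, $p\neq q$ means there exists $S\subset\Gamma$ with $S\in p$ but $S\notin q$ (equivalently, $\Gamma\setminus S\in q$). So it suffices to produce, for each such $S$, a projection $P_S\in X$ with $\langle u_\gamma, P_S\rangle = 1_S(\gamma)$ for every $\gamma\in\Gamma$; then $\plim_\gamma\langle u_\gamma,P_S\rangle = 1 \neq 0 = \qlim_\gamma\langle u_\gamma,P_S\rangle$, giving injectivity.

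The natural candidate is $P_S := \sup_{\gamma\in S} S(u_\gamma)$, which exists in the von Neumann algebra $X$. Because the supports $\{S(u_\gamma)\}_{\gamma\in S}$ are mutually orthogonal, this supremum coincides with the strong sum of these projections. I would verify $\langle u_\gamma, P_S\rangle = 1_S(\gamma)$ in two cases. For $\gamma \in S$: since $S(u_\gamma) \leq P_S$, the definition of support (as the infimum of projections on which $u_\gamma$ evaluates to $1$) forces $\langle u_\gamma, P_S\rangle = 1$. For $\gamma \notin S$: by normality of $u_\gamma$ applied to the orthogonal family $\{S(u_\beta)\}_{\beta\in S}$,
\[
\langle u_\gamma, P_S\rangle = \sum_{\beta\in S} \langle u_\gamma, S(u_\beta)\rangle.
\]
For each $\beta\in S$, orthogonality gives $S(u_\beta)\,S(u_\gamma) = 0$, i.e.\ $S(u_\beta) \leq I - S(u_\gamma)$, and since $u_\gamma$ vanishes on the orthogonal complement of its own support, each summand is $0$.

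The main obstacle is the case $\gamma\notin S$: one must simultaneously invoke normality (to pass the functional $u_\gamma$ through the supremum defining $P_S$) and orthogonality of supports (to kill each term). The normality hypothesis in \ref{Definition of Support} is exactly what makes this work, and the mutual orthogonality of the supports is what makes $P_S$ a meaningful ``indicator projection'' selecting the states indexed by $S$. Once the identity $\langle u_\gamma, P_S\rangle = 1_S(\gamma)$ is in hand, the conclusion is immediate from the definition of $p$-lim as evaluation of the extension $\tilde{f}$ at $p\in\beta\Gamma$ applied to the function $f(\gamma) = 1_S(\gamma)$, whose Stone--\v{C}ech extension is the characteristic function $1_{\bar S}$ of the clopen subset $\bar S\subset\beta\Gamma$ generated by $S$.
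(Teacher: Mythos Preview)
Your proof is correct and follows the same approach as the paper: distinguish $p\neq q$ by choosing $S\in p$, $S^C\in q$, form the projection $P_S=\sup_{\gamma\in S}S(u_\gamma)$, and show $\langle u_\gamma,P_S\rangle=1_S(\gamma)$. The only difference is in the $\gamma\notin S$ case: the paper bypasses your appeal to normality by observing directly that $P_S\leq I-S(u_\gamma)$ (since each $S(u_\beta)\leq I-S(u_\gamma)$ for $\beta\in S$, and the sup of projections below a fixed projection stays below it), from which $\langle u_\gamma,P_S\rangle\leq\langle u_\gamma,I-S(u_\gamma)\rangle=0$ follows by positivity alone.
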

\begin{proof}
Suppose $p,q$ are distinct ultrafilters, say $E\in p$ and $E^C \in q$.
Let $P = \sup_{\gamma\in E} S(u_\gamma)$.
Clearly $\langle \plim_\gamma u_\gamma, P\rangle = 1$.
For any $\beta\in E^C$, $P \leq 1 - S(u_\beta)$, hence $\langle u_\beta, P\rangle = 0$, hence $\langle\qlim_\gamma u_\gamma, P\rangle = 0$.
\end{proof}

\begin{point}\label{Linfty is operators}
For example, $\Linfty$ is a von Neumann algebra of multipliers on $\Ltwo$, and its predual is $\Lone$.
Let $\Pcal_1(G)$ denote the normal states on $\Linfty$.
By Sakai's result, the linear span of $\Pcal_1(G)$ is $\Lone$, 
so $\Pcal_1(G)$ itself must be $\{f\in \Lone : f\geq 0, \|f\|_1 = 1\}$.
Given $f\in\Pcal_1(G)$, let $\supp(f) = \{x\in G: f(x)\neq 0\}$.
Clearly $S(f)$ is the indicator function $1_{\supp(f)}$.
From this, we conclude $f,g \in \Pcal_1(G)$ are orthogonal if $\supp(f)\cap \supp(g) = \varnothing$.
\end{point}

\begin{point}
Let $X$ be any von Neumann algebra.
Endow $X^*$ with the $w^*$-topology,
and let $\Mcal\subset X^*$ be the set of all states.
In the context of amenability, $\Mcal$ is traditionally called the set of means on $X$.
Notice $\|m\|=1$ for each $m\in\Mcal$, since $T \leq \|T\| I$ for any self-adjoint $T$.
Notice $\Pcal_1$ is convex and $\Mcal$ is compact convex.
\end{point}

\begin{Lemma}[Hahn-Banach]\label{Lemma Hahn Banach}
Suppose $A, B\subset X^*$ are disjoint compact convex sets.
Then they are separated by some $T\in X$, in the sense
	$\inf_{a\in A}\Re\langle a, T\rangle > \sup_{b\in B}\Re\langle b, T\rangle$.
If $A,B$ consist of positive functionals, decompose $T$ into self-adjoint parts as $T_1 + iT_2$.
Then $\inf_{a\in A} \langle a, T_1\rangle > \sup_{b\in B} \langle b, T_1\rangle$.
\end{Lemma}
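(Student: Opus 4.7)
The plan is to invoke the Hahn-Banach separation theorem in the locally convex topological vector space $(X^*, w^*)$, where $X^*$ is equipped with the weak-$*$ topology. Under this topology $X^*$ is Hausdorff, and by hypothesis $A, B$ are disjoint compact convex subsets. The strict version of Hahn-Banach then furnishes a $w^*$-continuous real-linear functional $\phi\colon X^*\to\Rbb$ and real scalars $\alpha > \beta$ with $\phi(a) \geq \alpha$ for all $a\in A$ and $\phi(b)\leq \beta$ for all $b\in B$.

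The bridge to the statement of the lemma is the standard identification of the continuous dual of $(X^*, w^*)$: a complex-linear functional on $X^*$ is $w^*$-continuous iff it is of the form $a\mapsto \langle a, T\rangle$ for a (unique) $T\in X$, and a real-linear functional is $w^*$-continuous iff it is the real part of such a one. Applying this to $\phi$ produces $T\in X$ with $\phi(a) = \Re\langle a, T\rangle$, which combined with the previous inequality proves the first assertion.

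For the second part, decompose $T = T_1 + iT_2$ into self-adjoint parts, i.e.\ $T_1 = (T+T^*)/2$ and $T_2 = (T-T^*)/(2i)$. For any positive functional $a$ and any self-adjoint $S\in X$ the value $\langle a, S\rangle$ is real, so $\langle a, T_1\rangle, \langle a, T_2\rangle \in \Rbb$ and hence $\Re\langle a, T\rangle = \langle a, T_1\rangle$ for every $a\in A\cup B$. Substituting into the inequality from the first part yields $\inf_{a\in A}\langle a, T_1\rangle > \sup_{b\in B}\langle b, T_1\rangle$.

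No substantive obstacle is anticipated: the only non-mechanical points are recalling that $w^*$-continuous real-linear functionals on $X^*$ are exactly the real parts of evaluations against elements of $X$, and that positive functionals take real values on self-adjoint operators. Both are standard facts about $C^*$-algebras and duality, and neither requires the von Neumann structure of $X$ beyond what is already invoked in section 1.3.
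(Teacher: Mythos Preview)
Your proof is correct and follows exactly the standard route the paper intends: the paper's own ``proof'' is merely a citation to Rudin's functional analysis text (Theorems~3.4 and~3.10, which are the Hahn--Banach separation theorem and the identification of the $w^*$-dual of $X^*$ with $X$). You have simply written out in detail what those citations supply, including the observation that positive functionals are real on self-adjoint elements, so $\Re\langle a,T\rangle = \langle a,T_1\rangle$.
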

\begin{proof}
See any text on functional analysis, for example \cite[Theorems 3.4 and 3.10]{rudin}.
\end{proof}

\begin{Lemma}\label{Lemma P is Dense in M}
$\Pcal_1$ is dense in $\Mcal$.
\end{Lemma}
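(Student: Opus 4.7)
The plan is to argue by contradiction using Lemma \ref{Lemma Hahn Banach}. Suppose some $m \in \Mcal$ lies outside the $w^*$-closure of $\Pcal_1$. Since $\overline{\Pcal_1}$ is a closed subset of the compact convex set $\Mcal$, it is itself compact and convex. Applying the Hahn-Banach separation lemma to the disjoint compact convex sets $\{m\}$ and $\overline{\Pcal_1}$, both consisting of positive functionals, produces a self-adjoint $T \in X$ with
$$\langle m, T\rangle \;>\; \sup_{u \in \Pcal_1} \langle u, T\rangle.$$
The goal is to contradict this by showing normal states already reach the top of the spectrum of $T$.

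Realize $X$ concretely as a von Neumann algebra acting on a Hilbert space $H$. For each unit vector $\xi \in H$, the vector state $u_\xi : S \mapsto \langle S\xi, \xi\rangle$ is manifestly normal (sups of projections converge strongly, hence weakly in this matrix element), so $u_\xi \in \Pcal_1$. Standard spectral theory for the self-adjoint operator $T$ gives $\sup_{\|\xi\|=1} \langle T\xi, \xi\rangle = \sup \sigma(T)$: for any $\varepsilon > 0$, the spectral projection $E$ of $T$ for the interval $(\sup\sigma(T)-\varepsilon, \sup\sigma(T)]$ is nonzero, and any unit vector $\xi$ in its range satisfies $\langle T\xi, \xi\rangle > \sup\sigma(T) - \varepsilon$. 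Hence
$$\sup_{u \in \Pcal_1} \langle u, T\rangle \;\geq\; \sup_{\|\xi\|=1} \langle u_\xi, T\rangle \;=\; \sup \sigma(T).$$

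On the other hand, since $T \leq \sup\sigma(T)\cdot I$ as self-adjoint operators, and $m$ is a positive functional with $\langle m, I\rangle = 1$, we get $\langle m, T\rangle \leq \sup \sigma(T)$. Combined with the previous display this contradicts the strict separation, completing the proof.

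I do not anticipate a real obstacle here; the argument is a standard Hahn-Banach/spectral-theory pairing. The one point requiring a little care is to separate $\{m\}$ from the \emph{closure} $\overline{\Pcal_1}$ rather than $\Pcal_1$ itself (which is convex but need not be $w^*$-closed), and to use the self-adjoint variant of Hahn-Banach given in Lemma \ref{Lemma Hahn Banach} so that the separating functional $T$ can be taken self-adjoint, which is what makes spectral theory applicable.
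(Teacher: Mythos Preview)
Your proof is correct and follows essentially the same approach as the paper: separate by Hahn--Banach to get a self-adjoint $T$, then contradict the strict inequality using that normal (vector) states already realize the top of the spectrum of $T$. The only cosmetic difference is that the paper shifts to the positive operator $S = T + \|T\| I$ and phrases the contradiction as $\langle m, S\rangle > \|S\|$ against $\|m\|=1$, whereas you invoke $T \le (\sup\sigma(T))\,I$ and spectral projections directly; these are the same computation.
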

\begin{proof}
Suppose to the contrary that a mean $m$ lies outside $\cl(\Pcal_1)$.
Applying Lemma~\ref{Lemma Hahn Banach} to the sets $\{m\}$ and $\cl(\Pcal_1)$, obtain self-adjoint $T\in X$ such that
$\langle m, T\rangle > \sup_{u\in\Pcal_1} \langle u, T\rangle$.
Letting $S = T + \|T\| I \geq 0$, we have $\langle m, S\rangle > \sup_{u\in\Pcal_1} \langle u, S\rangle = \|S\|$,
contradicting $\|m\| = 1$.
\end{proof}
\chapter{Topologically Invariant Means on \texorpdfstring{$\Linfty$}{L-infinity(G)}}
\section{TI-nets in \texorpdfstring{$\Pcal_1(G)$}{P1(G)}}
\begin{point}
In previous sections, $|E|$ has denoted the cardinal number of $E$.
But when $E\subset G$, $|E|$ is understood to denote its left Haar measure.
Integrals are always taken with respect to left Haar measure.
The modular function $\triangle: G\to \Rbb^\times$ is a continuous homomorphism defined by $|Ut| = |U| \triangle(t)$.
The map $f\mapsto f^\dag$ defined by $f^\dag(x) = f(x^{-1}) \triangle(x^{-1})$ is an involution of $\Lone$ that sends $\Pcal_1(G)$ to itself.
Left and right translation are defined by $l_t f(x) = f(t^{-1} x)$ and $r_t f(x) = f(xt)$, so that $l_{xy} = l_x l_y$ and $r_{xy} = r_x r_y$.
Additionally, we define $R_t f(x) = f(xt^{-1}) \triangle(t^{-1})$, so that $R_t \frac{1_{U}}{|U|} = \frac{1_{U t}}{|U t|}$.
Notice $R_{xy} = R_y R_x$.
\end{point}

\begin{point}\label{Definition_fT}
For $T\in\Linfty$ and $f,g\in\Lone$, define $fT$ by $\langle fT, g\rangle = \langle T, f*g\rangle$,
and $T f$ by $\langle T f, g\rangle = \langle T, g*f\rangle$.
Equivalently, $fT(t) = \langle R_t f, T\rangle$, and $Tf(t) = \langle l_t f, T\rangle$.

A mean $m$ is said to be topologically left invariant if $\langle m, fT\rangle  = \langle m, T\rangle$ for all $f\in\Pcal_1(G)$ and $T\in\Linfty$.
(This is equivalent to the traditional definition, $\langle m, f*T\rangle = \langle m, T\rangle$.)
Likewise, $m$ is topologically right invariant if $\langle m,Tf\rangle = \langle m, T\rangle$, and topologically two-sided invariant if it is both of the above.
The sets of topologically left/right/two-sided invariant means on $\Linfty$ are denoted $\TLIM(G) / \TRIM(G) / \TIM(G)$.
\end{point}

\begin{point}
A net $\{f_\gamma\}\subset\Pcal_1(G)$ is called a weak left TI-net if $\langle g*f_\gamma - f_\gamma, T\rangle \to 0$ for all $g\in\Pcal_1(G)$ and $T\in\Linfty$.
By Lemma~\ref{Lemma P is Dense in M}, topologically left invariant means are precisely the limit points of weak left TI-nets.
$\{f_\gamma\}$ is simply called a left TI-net if $\|g*f_\gamma - f_\gamma\|_1 \to 0$ for all $g\in\Pcal_1(G)$.
The method of \cite{WeakNotStrong} can be used to construct weak TI-nets that are not TI-nets, which makes Corollary~\ref{Cor TI-nets suffices} interesting.

The analogous definitions and statements for right/two-sided TI-nets are obvious.
\end{point}

\begin{point}
As in \ref{Intro_Number_of_Tims}, suppose $\Kcal$ is a collection of compact sets covering $G$ with $|\Kcal| = \kappa$.
Let $\Kcal'$ be the set of all finite unions in $\Kcal$.
Pick any compact $U\subset G$ with nonempty interior.
Then $\Kcal'' = \{UK : K\in \Kcal'\}$ is a collection of compact sets satisfying:
(1) $|\Kcal''| = \kappa$.
(2) $\Kcal''$ is closed under finite unions.
(3) $\bigcup_{K\in\Kcal''}K^\circ = G$.
\\Hence there is no loss of generality in supposing $\Kcal$ itself satisfies (1)-(3).
It follows that every compact $C\subset G$ is contained in some $K\in\Kcal$.
\end{point}

\begin{point}\label{Definition_Folner_Net}
Suppose $G$ is noncompact amenable.
Let $\Gamma = \{(K, n) : K\in\Kcal,\ n\in\Nbb\}$.
F\o{}lner's condition says that for each $\gamma = (K,n)$, we can pick a compact set $F_\gamma$ that is $(K, \frac1n)$-left-invariant.
In other words, letting $\lambda_\gamma = \frac{1_{F_\gamma}}{|F_\gamma|}$,
 we have
	$\| l_x \lambda_\gamma - \lambda_\gamma\|_1 < \frac1n$ for each $x\in K$.
Now $\Gamma$ becomes a directed set when endowed with the following partial-order:
$[(K,m) \preceq (J,n)] \iff [K\subseteq J$ and $m\leq n]$.
By condition (2) above, notice that each tail of $\Gamma$ has cardinality $\kappa \cdot \Nbb = \kappa = |\Gamma|$, as required by Lemma~\ref{Lemma Cardinality of Gamma*}.
\end{point}

\begin{Lemma}\label{Lemma lambda_gamma is left TI-net}
$\{\lambda_\gamma\}$ is a left TI-net.
\end{Lemma}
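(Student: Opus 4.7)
The plan is to derive a single master estimate, apply it to compactly supported $g$, and extend by norm-density to all $g \in \Pcal_1(G)$. Since $\int_G g\dd x = 1$, one may write
\begin{equation*}
  (g*\lambda_\gamma - \lambda_\gamma)(t) = \int_G g(x)\bigl[(l_x\lambda_\gamma)(t) - \lambda_\gamma(t)\bigr]\dd x.
\end{equation*}
Taking absolute values inside, integrating in $t$, and swapping the order of integration by Tonelli produces the master estimate
\begin{equation*}
  \|g * \lambda_\gamma - \lambda_\gamma\|_1 \;\leq\; \int_G g(x)\,\|l_x\lambda_\gamma - \lambda_\gamma\|_1 \dd x.
\end{equation*}

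Next I would handle the case when $g$ has compact support $C$. By the properties of $\Kcal$ established in the paragraph before Definition~\ref{Definition_Folner_Net} (covering by interiors and closure under finite unions), there is some $K \in \Kcal$ with $C \subseteq K$. For any index $\gamma = (K',n) \succeq (K,1)$ the defining F\o{}lner bound on $\lambda_\gamma$ gives $\|l_x\lambda_\gamma - \lambda_\gamma\|_1 < 1/n$ for every $x \in K \supseteq C$, and the master estimate collapses to $\|g*\lambda_\gamma - \lambda_\gamma\|_1 < 1/n \to 0$.

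For general $g \in \Pcal_1(G)$, fix $\epsilon > 0$ and truncate-then-renormalize: pick a compact $K_0$ with $\int_{K_0} g > 1 - \epsilon/2$, set $h_0 = g\cdot 1_{K_0}$, and let $h = h_0/\|h_0\|_1$. Then $h \in \Pcal_1(G)$ has compact support, and $\|g - h\|_1 \leq \|g - h_0\|_1 + \|h_0 - h\|_1 < \epsilon$. By Young's inequality, $\|(g-h)*\lambda_\gamma\|_1 \leq \|g-h\|_1 < \epsilon$, so
\begin{equation*}
  \|g * \lambda_\gamma - \lambda_\gamma\|_1 \;\leq\; \|(g-h)*\lambda_\gamma\|_1 + \|h*\lambda_\gamma - \lambda_\gamma\|_1 \;<\; \epsilon + \|h*\lambda_\gamma - \lambda_\gamma\|_1.
\end{equation*}
The second term vanishes in the limit by the compactly supported case, so $\limsup_\gamma \|g*\lambda_\gamma - \lambda_\gamma\|_1 \leq \epsilon$. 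Letting $\epsilon \to 0$ finishes the proof.

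I do not foresee a real obstacle — the argument is the standard conversion of pointwise F\o{}lner invariance into convolution-norm convergence. The only subtlety is that the F\o{}lner bound only controls $l_x\lambda_\gamma - \lambda_\gamma$ for $x$ in the compact set $K$, which is precisely why the truncation/density step is required before Tonelli can deliver a uniform bound.
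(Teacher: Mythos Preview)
Your proof is correct and follows essentially the same approach as the paper: both derive the master estimate $\|g*\lambda_\gamma-\lambda_\gamma\|_1 \le \int g(x)\,\|l_x\lambda_\gamma-\lambda_\gamma\|_1\,dx$ via Tonelli, bound the integrand uniformly on a compact support using the F\o{}lner condition, and reduce to compact support by density. The only difference is cosmetic---you spell out the truncation/renormalization and Young's inequality for the density step, whereas the paper simply asserts ``we may suppose $f$ has compact support.''
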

\begin{proof}
Pick $f\in\Pcal_1(G)$.
Since compactly supported functions are dense in $\Pcal_1(G)$, we may suppose $f$ has compact support $C$.
Pick $K\in\Kcal$ containing $C$.
\\Notice $\|f*\lambda_\gamma - \lambda_\gamma\|_1
	\leq \int \int f(t) |\lambda_\gamma(t^{-1}x) - \lambda_\gamma(x)| \dd{x} \dd{t}
	= \int f(t) \|l_t \lambda_\gamma - \lambda_\gamma\|_1 \dd{t}
	\leq \sup_{t\in C} \|l_t \lambda_\gamma - \lambda_\gamma\|_1$.
\\In particular, $\|f*\lambda_\gamma - \lambda_\gamma\|_1 < 1/n$ whenever $\gamma \succeq (K, n)$.
\end{proof}

\begin{Corollary}\label{Cor TI-nets suffices}
Every topologically left invariant mean is the limit of a left TI-net.
(Likewise for right/two-sided invariant means, although we omit the proof for those cases.)
\end{Corollary}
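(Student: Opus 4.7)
My plan is to apply Day's convexity trick to promote a weak left TI-net converging to $m$ into a norm left TI-net with the same weak-* limit.

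First, by Lemma~\ref{Lemma P is Dense in M}, I pick a net $\{f_\alpha\}\subset\Pcal_1(G)$ with $f_\alpha\to m$ in the weak-* topology on $\Linfty^*$. For each $g\in\Pcal_1(G)$ and $T\in\Linfty$, the definition in \ref{Definition_fT} gives $\langle g*f_\alpha - f_\alpha, T\rangle = \langle f_\alpha, gT - T\rangle$, which converges to $\langle m, gT\rangle - \langle m, T\rangle = 0$ since $m\in\TLIM(G)$. Regarding $g*f_\alpha - f_\alpha$ as an element of the predual $\Lone$, this says $g*f_\alpha - f_\alpha \to 0$ in the weak topology of $\Lone$ for each fixed $g$.

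Next, I fix a finite set $F=\{g_1,\ldots,g_n\}\subset\Pcal_1(G)$ and consider the affine map $\Phi_F:\Pcal_1(G)\to \Linfty^*\times (\Lone)^n$ sending $f$ to $(f,\,g_1*f - f,\,\ldots,\,g_n*f - f)$. Since $\Pcal_1(G)$ is convex and the map is affine, the image is convex. I equip the codomain with the weak-* topology on the first factor and the norm topology on each $\Lone$ factor, producing a locally convex topology whose associated weak topology pairs weak-* on $\Linfty^*$ with weak on each $\Lone$. The previous paragraph shows $\Phi_F(f_\alpha)\to (m,0,\ldots,0)$ in this weak topology, so by Mazur's theorem the point $(m,0,\ldots,0)$ also lies in the original (weak-* $\times$ norm) closure of $\Phi_F(\Pcal_1(G))$.

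Consequently, for every weak-* neighborhood $U$ of $m$ and every $\epsilon>0$, there exists $h_{F,U,\epsilon}\in\Pcal_1(G)\cap U$ with $\|g_i*h_{F,U,\epsilon} - h_{F,U,\epsilon}\|_1<\epsilon$ for $i=1,\ldots,n$. Directing the triples $(F,U,\epsilon)$ by enlargement of $F$, refinement of $U$, and decreasing $\epsilon$, the resulting diagonal net is a left TI-net converging to $m$ in the weak-* topology. The only delicate point is the mixed-topology use of Mazur's theorem, but it is standard: Mazur requires only local convexity, and the product of a weak-* topology with norm topologies is locally convex, so the needed separation reduces to the Hahn-Banach statement already recorded in Lemma~\ref{Lemma Hahn Banach}. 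The analogous statements for $\TRIM(G)$ and $\TIM(G)$ then follow \emph{mutatis mutandis} by using $f*g_i$ in place of (or together with) $g_i*f$, together with the fact that $\Pcal_1(G)$ is closed under convolution.
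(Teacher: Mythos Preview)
Your proof is correct, and the mixed-topology Mazur step is handled properly: the dual of $(L_\infty^*,w^*)\times (L_1,\|\cdot\|)^n$ is indeed $L_\infty\times(L_\infty)^n$, so the associated weak topology is exactly weak-$*$ $\times$ weak, and the image $\Phi_F(\Pcal_1(G))$ is convex because $\Phi_F$ is affine on the convex set $\Pcal_1(G)$.

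However, your route is genuinely different from the paper's. You use Day's convexity trick, passing to convex combinations of a weak TI-net to upgrade weak $L_1$-convergence to norm convergence, and you never touch the F\o{}lner net $\{\lambda_\gamma\}$. The paper instead exploits the specific net $\{\lambda_\gamma\}$ just constructed: given an arbitrary approximant $g\in\Pcal_1(G)$ to $m$, it replaces $g$ by $\lambda_\gamma * g$, which inherits norm almost-invariance from $\lambda_\gamma$ while staying close to $m$ because $\langle m,T\rangle=\langle m,\lambda_\gamma T\rangle$ for $m\in\TLIM(G)$. The paper's argument is shorter and keeps everything within the elementary machinery already on the page; your argument is more portable (it needs neither F\o{}lner nor any pre-existing TI-net), and it is the standard Day--Namioka approach one would reach for in a general Banach-algebra setting.
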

\begin{proof}
Suppose $m$ is not the limit of any left TI-net.
Then there exist $T_1, \hdots, T_n \in\Linfty$, $ f\in\Pcal_1(G)$, and $\epsilon > 0$
such that for any $g\in\Pcal_1(G)$ with $\|f*g - g\|_1 < \epsilon$, $\max_{i=1\hdots n}|\langle m-g, T_i\rangle| > \epsilon$.
\\Let $\gamma$ be large enough that $\|f * \lambda_\gamma - \lambda_\gamma\|_1 < \epsilon$, and let $\lambda = \lambda_\gamma$.
\\By Lemma~\ref{Lemma P is Dense in M}, pick $g\in\Pcal_1(G)$ with $\max_{i=1\hdots n}|\langle m-g, T_i\rangle| < \epsilon$ and $\max_{i=1\hdots n} |\langle m-g, \lambda T_i\rangle| < \epsilon$.
\\Now $\|f*(\lambda*g) - (\lambda * g)\|_1 < \epsilon$,
	so by hypothesis $|\langle m-\lambda*g, T_i\rangle| > \epsilon$ for some $i$.
\\But by choice of $g$, $|\langle m-\lambda*g, T_i\rangle|
	= |\langle m, T_i\rangle - \langle g, \lambda T_i\rangle|
	= |\langle m - g, \lambda T_i\rangle|
	< \epsilon$, a contradiction.
\end{proof}

\begin{Corollary}
Let $\rho_\gamma = \lambda_\gamma^\dag$. Then $\{\rho_\gamma\}$ is a right TI-net.
\end{Corollary}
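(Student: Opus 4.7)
The plan is to reduce convergence of $\rho_\gamma\ast g-\rho_\gamma$ in $L_1$ to the already-established convergence of $g^\dag \ast \lambda_\gamma - \lambda_\gamma$, using the behavior of the involution $f\mapsto f^\dag$ under convolution. Specifically, I would show that for any $g\in\Pcal_1(G)$,
\[
\|\rho_\gamma \ast g - \rho_\gamma\|_1 = \|g^\dag \ast \lambda_\gamma - \lambda_\gamma\|_1,
\]
and then quote Lemma~\ref{Lemma lambda_gamma is left TI-net} together with the fact that $g^\dag\in\Pcal_1(G)$ (which is recorded in the paper since $\dag$ preserves $\Pcal_1(G)$) to conclude the right-hand side tends to $0$.

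The first step is to verify the two standard identities supporting the above equation: (i) $(a\ast b)^\dag = b^\dag \ast a^\dag$ for all $a,b\in\Lone$, and (ii) $\|f^\dag\|_1 = \|f\|_1$. Identity (ii) is a one-line change of variables $x\mapsto x^{-1}$, whose Jacobian is precisely $\triangle(x^{-1})$, exactly what appears in the definition of $f^\dag$. Identity (i) is likewise a routine change of variables inside the convolution integral, again using $\triangle$ to account for the inversion. Together they imply that $\dag$ is an involutive isometry of $\Lone$ that reverses convolution.

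Applying these to our setting: since $\rho_\gamma = \lambda_\gamma^\dag$, we get
\[
\rho_\gamma \ast g = \lambda_\gamma^\dag \ast g = \bigl( g^\dag \ast \lambda_\gamma \bigr)^\dag,
\]
where the second equality uses identity (i) twice (applying $\dag$ to the product $g^\dag \ast \lambda_\gamma$ and invoking $\dag\circ\dag = \mathrm{id}$). Subtracting $\rho_\gamma = \lambda_\gamma^\dag$ and pulling the involution out of the difference yields
\[
\rho_\gamma \ast g - \rho_\gamma = \bigl( g^\dag \ast \lambda_\gamma - \lambda_\gamma \bigr)^\dag,
\]
and taking $L_1$-norms gives the claimed identity via isometry (ii). By Lemma~\ref{Lemma lambda_gamma is left TI-net} applied to $g^\dag\in\Pcal_1(G)$, the right-hand side converges to $0$, completing the proof.

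The only real obstacle is bookkeeping with the modular function in identity (i); everything else is formal manipulation. Since the paper has already set up $\dag$, $R_t$, and $\triangle$ carefully in \ref{Definition_fT}, I expect the verification to be a short change-of-variables calculation that one could either include in a single display or cite as a standard fact about the convolution algebra $\Lone$.
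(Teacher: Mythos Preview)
Your proof is correct and follows exactly the same approach as the paper: the paper's proof is the single line $\|\rho_\gamma * f - \rho_\gamma\|_1 = \|f^\dag * \lambda_\gamma - \lambda_\gamma\|_1 \to 0$, and you have simply unpacked the standard identities $(a*b)^\dag = b^\dag * a^\dag$ and $\|f^\dag\|_1 = \|f\|_1$ that justify that equality.
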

\begin{proof}
For any $f\in \Pcal_1(G)$, $\|\rho_\gamma *f - \rho_\gamma\|_1 = \|f^\dag*\lambda_\gamma - \lambda_\gamma\|_1\to 0$.
\end{proof}

\begin{Lemma}\label{Lemma_Product_of_TI_Nets}
If $\{f_\gamma\}$ is a left TI-net, and $\{g_\gamma\}$ is a right TI-net, then $\{f_\gamma *g_\gamma\}$ is a two-sided TI-net.
\end{Lemma}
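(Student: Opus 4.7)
My plan is to verify the two one-sided conditions separately, exploiting associativity of convolution together with the standard bound $\|a*b\|_1\leq \|a\|_1\|b\|_1$ on $\Lone$. First I would note that $f_\gamma * g_\gamma\in\Pcal_1(G)$ for every $\gamma$: positivity is preserved under convolution, and by Fubini plus left-invariance of Haar measure we have $\int f_\gamma * g_\gamma = (\int f_\gamma)(\int g_\gamma) = 1$. So the product net is a legitimate candidate for a TI-net.

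For topological left invariance, fix $h\in\Pcal_1(G)$. I would rewrite
\[
h*(f_\gamma*g_\gamma) - f_\gamma*g_\gamma = (h*f_\gamma - f_\gamma)*g_\gamma
\]
using associativity, then apply the $L_1$ convolution inequality to get
\[
\|h*(f_\gamma*g_\gamma) - f_\gamma*g_\gamma\|_1 \leq \|h*f_\gamma - f_\gamma\|_1 \cdot \|g_\gamma\|_1 = \|h*f_\gamma - f_\gamma\|_1,
\]
which tends to $0$ by the hypothesis that $\{f_\gamma\}$ is a left TI-net. Topological right invariance is entirely symmetric: writing $(f_\gamma*g_\gamma)*h - f_\gamma*g_\gamma = f_\gamma*(g_\gamma*h - g_\gamma)$ and bounding in the same way reduces the convergence to $\|g_\gamma*h - g_\gamma\|_1\to 0$, which holds because $\{g_\gamma\}$ is a right TI-net.

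There is no real obstacle; the only thing to be careful about is keeping track of the asymmetry introduced by the modular function when verifying $\|f_\gamma*g_\gamma\|_1 = 1$, but since both factors are non-negative this reduces to a routine application of left-invariance of Haar measure. Everything else is a one-line manipulation, so the proof will be short.
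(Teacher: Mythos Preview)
Your proposal is correct and mirrors the paper's proof almost verbatim: check $f_\gamma*g_\gamma\in\Pcal_1(G)$, then use associativity and $\|a*b\|_1\leq\|a\|_1\|b\|_1$ to reduce each one-sided condition to the corresponding hypothesis. The remark about the modular function is unnecessary here, since for nonnegative $f,g$ Fubini and left-invariance give $\int f*g=(\int f)(\int g)$ directly.
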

\begin{proof}
First of all, $f_\gamma* g_\gamma \geq 0$ and $\|f_\gamma* g_\gamma\|_1 = 1$, so $\{f_\gamma *g_\gamma\} \subset \Pcal_1(G)$.
Now for each $h\in\Pcal_1(G)$, $\|h*f_\gamma* g_\gamma - f_\gamma* g_\gamma\| \leq \|h*f_\gamma - f_\gamma\| \cdot \|g_\gamma\| \to 0$
and $\|f_\gamma* g_\gamma* h - f_\gamma* g_\gamma\| \leq \|f_\gamma\| \cdot \|g_\gamma *h - g_\gamma\|\to 0$.
\end{proof}

\begin{point}
The next lemma generalizes \cite[Theorem 3.2]{Chou70}.
Intuitively it says, ``we can prove facts about the entire set $\TLIM(G)$, simply by proving them about the right-translates of a single left TI-net.''
After that, we have generalizations to $\TRIM(G)$ and $\TIM(G)$.
\end{point}

\begin{Lemma}\label{Lemma_cl_conv_Xp}
For any $p\in\Gamma^*$,
let $X_p = \{\plim_\gamma [R_{t_\gamma} \lambda_\gamma] : \{t_\gamma\}\in G^\Gamma\}.$
Then $\cl(\conv(X_p)) = \TLIM(G)$.
\end{Lemma}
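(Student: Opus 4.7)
The plan is to establish the two inclusions separately.

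For the easy direction $\cl(\conv(X_p)) \subseteq \TLIM(G)$, fix any $\{t_\gamma\} \in G^\Gamma$. Each $R_{t_\gamma}\lambda_\gamma = 1_{F_\gamma t_\gamma}/|F_\gamma t_\gamma|$ lies in $\Pcal_1(G)$, and since $R_{t_\gamma}$ is an $L_1$-isometry commuting with left convolution,
\[\|g * R_{t_\gamma}\lambda_\gamma - R_{t_\gamma}\lambda_\gamma\|_1 = \|R_{t_\gamma}(g*\lambda_\gamma - \lambda_\gamma)\|_1 = \|g * \lambda_\gamma - \lambda_\gamma\|_1 \to 0\]
for every $g \in \Pcal_1(G)$. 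Hence $\{R_{t_\gamma}\lambda_\gamma\}$ is itself a left TI-net, so $\plim_\gamma R_{t_\gamma}\lambda_\gamma \in \TLIM(G)$. Since $\TLIM(G)$ is $w^*$-closed and convex, the inclusion follows.

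For the reverse inclusion $\TLIM(G) \subseteq \cl(\conv(X_p))$, I argue by contradiction via Hahn-Banach. Suppose $m \in \TLIM(G)$ lies outside $\cl(\conv(X_p))$. Lemma~\ref{Lemma Hahn Banach} supplies a self-adjoint $T \in \Linfty$ and $\alpha \in \Rbb$ with $\langle m, T\rangle > \alpha \geq \langle n, T\rangle$ for all $n \in X_p$. Recalling from \ref{Definition_fT} that $\lambda_\gamma T(t) = \langle R_t\lambda_\gamma, T\rangle$, the separating condition becomes
\[\plim_\gamma \lambda_\gamma T(t_\gamma) \leq \alpha \quad \text{for every } \{t_\gamma\} \in G^\Gamma.\]

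The crux is to plug in a near-maximizing choice. Since $T$ is self-adjoint, $\lambda_\gamma T$ is a bounded (and in fact continuous) real-valued function; for any $\epsilon > 0$ I may pick $t_\gamma$ with $\lambda_\gamma T(t_\gamma) > \sup_t \lambda_\gamma T(t) - \epsilon$. Then $\plim_\gamma \sup_t \lambda_\gamma T(t) \leq \alpha + \epsilon$, and letting $\epsilon \to 0$ yields $\plim_\gamma \sup_t \lambda_\gamma T(t) \leq \alpha$. On the other hand, topological left-invariance of $m$ gives $\langle m, T\rangle = \langle m, \lambda_\gamma T\rangle$, and since $m$ is a state and $\lambda_\gamma T \leq (\sup_t \lambda_\gamma T(t))\cdot 1$ as real elements of $\Linfty$, we obtain $\langle m, T\rangle \leq \sup_t \lambda_\gamma T(t)$ for every $\gamma$. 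Taking $p$-limits yields $\langle m, T\rangle \leq \alpha$, contradicting $\langle m, T\rangle > \alpha$.

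The only genuine subtlety is the interplay between pointwise and essential supremum of $\lambda_\gamma T$, which is harmless because $\lambda_\gamma T$ admits a continuous representative. The key conceptual point, which I expect to reuse for $\TRIM(G)$ and $\TIM(G)$, is that topological left-invariance converts $T$ into the averaged function $\lambda_\gamma T$, whose pointwise supremum is precisely what the right-translates $R_{t_\gamma}\lambda_\gamma$ are designed to probe.
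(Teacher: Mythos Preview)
Your argument is correct and follows the same Hahn--Banach separation strategy as the paper. The only cosmetic difference is in how the near-maximizing error is handled: the paper exploits the index structure $\gamma=(K,n)$ to choose a single family $\{t_\gamma\}$ with error $1/n$ that vanishes along the $p$-limit, whereas you fix a uniform $\epsilon$, pass to the $p$-limit, and then let $\epsilon\to 0$ afterward; both reach the same contradiction, and your explicit verification of the easy inclusion $X_p\subset\TLIM(G)$ is a welcome addition the paper leaves implicit.
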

\begin{proof}
Suppose $m \in\TLIM(G)$ lies outside the closed convex hull of $X_p$.
\\As in the proof of Lemma~\ref{Lemma P is Dense in M},
	there exists $T\in L_\infty(G,\Rbb)$ with $\langle m,T\rangle > \sup_{\nu\in X_p}\langle\nu,T\rangle$.
\\For each $\gamma$,
$\langle m,T\rangle
	= \langle m, \lambda_\gamma T\rangle
	\leq \|\lambda_\gamma T\|_\infty
	= \sup_t [\lambda_\gamma T(t)]
	= \sup_t \langle R_t \lambda_\gamma,\ T \rangle$.
\\In particular, if $\gamma = (K,n)$, choose $t_\gamma$ so that
$\langle m, T\rangle < \langle R_{t_\gamma} \lambda_\gamma, T\rangle + 1/n$.
\\Now $\langle m,T\rangle \leq \langle\plim_\gamma [R_{t_\gamma}\lambda_\gamma],T\rangle$, contradicting our choice of $m$.
\end{proof}

\begin{Lemma}
For any $p\in\Gamma^*$,
let $X_p = \{\plim_\gamma [l_{t_\gamma} \rho_\gamma] : \{t_\gamma\}\in G^\Gamma\}.$
Then $\cl(\conv(X_p)) = \TRIM(G)$.
\end{Lemma}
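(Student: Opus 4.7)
The proof will be a direct mirror of Lemma~\ref{Lemma_cl_conv_Xp}, with left and right convolutions swapped and $\rho_\gamma$ in place of $\lambda_\gamma$.

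First I would verify the easy inclusion $X_p \subset \TRIM(G)$. Since $l_t(f*h) = (l_t f)*h$ for any $f,h\in\Lone$ (a change of variable in the convolution integral), we get
$\|(l_{t_\gamma}\rho_\gamma)*h - l_{t_\gamma}\rho_\gamma\|_1 = \|l_{t_\gamma}(\rho_\gamma*h - \rho_\gamma)\|_1 = \|\rho_\gamma*h - \rho_\gamma\|_1 \to 0$
for every $h\in\Pcal_1(G)$, so left-translating a right TI-net yields another right TI-net. Any $w^*$-limit point, in particular $\plim_\gamma[l_{t_\gamma}\rho_\gamma]$, is therefore in $\TRIM(G)$, and since $\TRIM(G)$ is closed and convex we get $\cl(\conv(X_p)) \subset \TRIM(G)$.

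For the reverse inclusion I would argue by contradiction exactly as in Lemma~\ref{Lemma_cl_conv_Xp}. Suppose $m\in\TRIM(G)$ lies outside $\cl(\conv(X_p))$. Lemma~\ref{Lemma Hahn Banach} produces a self-adjoint $T\in\Linfty$ with $\langle m, T\rangle > \sup_{\nu\in X_p}\langle \nu, T\rangle$. Right invariance of $m$ gives $\langle m, T\rangle = \langle m, T\rho_\gamma\rangle$ for every $\gamma$, and the identity $T\rho_\gamma(t) = \langle l_t\rho_\gamma, T\rangle$ from \ref{Definition_fT} yields
$\langle m, T\rangle = \langle m, T\rho_\gamma\rangle \leq \|T\rho_\gamma\|_\infty = \sup_{t\in G}\langle l_t\rho_\gamma, T\rangle$.
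For each $\gamma = (K,n)$, pick $t_\gamma\in G$ with $\langle m, T\rangle < \langle l_{t_\gamma}\rho_\gamma, T\rangle + 1/n$. Taking the $p$-limit of this inequality gives $\langle m, T\rangle \leq \langle \plim_\gamma[l_{t_\gamma}\rho_\gamma], T\rangle$, which contradicts the separation since $\plim_\gamma[l_{t_\gamma}\rho_\gamma]\in X_p$.

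There is no real obstacle here: the only thing to get right is the book-keeping that swapping $\lambda_\gamma \leftrightarrow \rho_\gamma$ forces swapping $R_t \leftrightarrow l_t$ and $fT \leftrightarrow Tf$. The left-invariance step ``$\langle m, T\rangle = \langle m, \lambda_\gamma T\rangle$'' is replaced by its right analogue ``$\langle m, T\rangle = \langle m, T\rho_\gamma\rangle$'', and the coordinate formula $R_t\lambda_\gamma(\cdot) = \ldots$ in the original becomes $T\rho_\gamma(t) = \langle l_t\rho_\gamma, T\rangle$, so the sup that approximates $\langle m, T\rangle$ is now a sup of $\langle l_t\rho_\gamma, T\rangle$ rather than $\langle R_t\lambda_\gamma, T\rangle$. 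Everything else — the choice of $t_\gamma$ to close the $1/n$ gap and passing to $p$-limits along the tail-rich ultrafilter $p\in\Gamma^*$ — is identical.
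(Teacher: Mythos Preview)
Your proposal is correct and follows exactly the paper's approach: the paper simply says ``essentially the same as the proof of Lemma~\ref{Lemma_cl_conv_Xp}'' and records the single replaced line $\langle m,T\rangle = \langle m, T\rho_\gamma\rangle \leq \|T\rho_\gamma\|_\infty = \sup_t \langle l_t\rho_\gamma, T\rangle$, which is precisely the substitution you made. Your explicit verification of the easy inclusion $X_p\subset\TRIM(G)$ is a harmless addition the paper leaves implicit.
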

\begin{proof}
Essentially the same as the proof of Lemma~\ref{Lemma_cl_conv_Xp} but the third line becomes:
\\For each $\gamma$,
$\langle m,T\rangle
	= \langle m, T\rho_\gamma\rangle
	\leq \|T \rho_\gamma\|_\infty
	= \sup_t [T\rho_\gamma(t)]
	= \sup_t \langle l_t \rho_\gamma,\ T \rangle$.
\end{proof}

\begin{Lemma}
For any $p\in\Gamma^*$, 
let $X_p = \{\plim_\gamma [\lambda_\gamma *(l_{t_\gamma} \rho_\gamma)]: \{t_\gamma\}\in G^\Gamma\}$.
Then $\cl(\conv(X_p)) = \TIM(G)$.
\end{Lemma}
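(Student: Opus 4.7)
The plan is to follow the same template as the two previous lemmas: use Lemma~\ref{Lemma_Product_of_TI_Nets} for the easy inclusion $\cl(\conv(X_p))\subseteq \TIM(G)$, and mimic the Hahn--Banach separation of Lemma~\ref{Lemma_cl_conv_Xp} for the reverse.

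For the forward inclusion, I first need to verify that $\{l_{t_\gamma}\rho_\gamma\}$ remains a right TI-net for any choice of $\{t_\gamma\}\in G^\Gamma$. The identity $(l_s f)*h = l_s(f*h)$, immediate from left-invariance of Haar measure, reduces $\|(l_{t_\gamma}\rho_\gamma)*h - l_{t_\gamma}\rho_\gamma\|_1$ to $\|\rho_\gamma*h - \rho_\gamma\|_1 \to 0$. Then Lemma~\ref{Lemma_Product_of_TI_Nets} says $\{\lambda_\gamma*(l_{t_\gamma}\rho_\gamma)\}$ is a two-sided TI-net, so every $p$-limit lies in $\TIM(G)$, which is closed and convex.

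For the reverse inclusion, I suppose some $m\in\TIM(G)$ lies outside $\cl(\conv(X_p))$ and seek a contradiction. Exactly as in Lemma~\ref{Lemma P is Dense in M}, Lemma~\ref{Lemma Hahn Banach} supplies a real-valued $T\in\Linfty$ with $\langle m, T\rangle > \sup_{\nu\in X_p}\langle \nu, T\rangle$. Applying left then right topological invariance of $m$,
\[
\langle m, T\rangle = \langle m, \lambda_\gamma T\rangle = \langle m, (\lambda_\gamma T)\rho_\gamma\rangle \leq \sup_t[(\lambda_\gamma T)\rho_\gamma](t),
\]
the last inequality because $m$ is a state. Unwinding the definitions in \ref{Definition_fT} twice yields $(\lambda_\gamma T)\rho_\gamma(t) = \langle \lambda_\gamma*l_t\rho_\gamma, T\rangle$. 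For $\gamma = (K,n)$, choose $t_\gamma$ so that $\langle m, T\rangle < \langle \lambda_\gamma*l_{t_\gamma}\rho_\gamma, T\rangle + 1/n$; taking the $p$-limit along $\gamma$ then contradicts the separation.

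I do not foresee any conceptual obstacle. The one point requiring attention is the order in which invariance is applied: going left-then-right produces the expression $\lambda_\gamma*(l_{t_\gamma}\rho_\gamma)$ matching the statement of $X_p$, whereas right-then-left would produce a different form involving $R_t$ and the modular function. Everything else is mechanical once the analogous steps of Lemma~\ref{Lemma_cl_conv_Xp} have been internalized.
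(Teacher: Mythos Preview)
Your proposal is correct and follows essentially the same approach as the paper. The paper's proof simply notes that the key line becomes $\langle m,T\rangle = \langle m,\lambda_\gamma T\rho_\gamma\rangle \leq \|\lambda_\gamma T\rho_\gamma\|_\infty = \sup_t\langle \lambda_\gamma*(l_t\rho_\gamma),T\rangle$, which is exactly your computation; you have additionally spelled out the easy inclusion $\cl(\conv(X_p))\subseteq\TIM(G)$ via Lemma~\ref{Lemma_Product_of_TI_Nets}, which the paper leaves implicit.
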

\begin{proof}
Essentially the same as the proof of Lemma~\ref{Lemma_cl_conv_Xp} but the third line becomes:
\\For each $\gamma$,
	$\langle m,T\rangle
	= \langle m, \lambda_\gamma T \rho_\gamma\rangle
	\leq \|\lambda_\gamma T \rho_\gamma\|_\infty
	=\sup_t[\lambda_\gamma T \rho_\gamma(t)]
	=\sup_t \langle \lambda_\gamma * (l_t \rho_\gamma),\ T\rangle$.
\end{proof}

\begin{Lemma} \label{Lemma Disjoint Folner}
There exists $\{t_\gamma\}\in G^\Gamma$ such that $\{F_\gamma t_\gamma F_\gamma^{-1}\}$ are mutually disjoint.
Since $\supp(\lambda_\gamma *(l_t \rho_\gamma)) \subset F_\gamma t F_\gamma^{-1}$, it follows from \ref{Linfty is operators} and Lemma~\ref{Lemma_Product_of_TI_Nets} that $\{\lambda_\gamma *(l_t \rho_\gamma)\}$ is an orthogonal TI-net.
\end{Lemma}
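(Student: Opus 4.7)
The plan is to well-order $\Gamma$ by the ordinal $\kappa$ and construct the $t_\gamma$ by transfinite induction, at each stage picking $t_\gamma$ outside a forbidden union of compact sets.

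Since $|\Gamma|=\kappa$, I would enumerate $\Gamma$ as $\{\gamma_\alpha\}_{\alpha<\kappa}$ and, abbreviating $F_\alpha := F_{\gamma_\alpha}$ and $t_\alpha := t_{\gamma_\alpha}$, note the algebraic equivalence
$F_\alpha t_\alpha F_\alpha^{-1} \cap F_\beta t_\beta F_\beta^{-1} = \varnothing$ iff $t_\alpha \notin F_\alpha^{-1} F_\beta t_\beta F_\beta^{-1} F_\alpha$. Thus at stage $\alpha$, having already fixed $\{t_\beta\}_{\beta<\alpha}$, it suffices to choose
$t_\alpha \in G \setminus A_\alpha$, where
$A_\alpha := \bigcup_{\beta<\alpha} F_\alpha^{-1} F_\beta t_\beta F_\beta^{-1} F_\alpha$.
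The induction will then produce the required family, provided each $A_\alpha$ is a proper subset of $G$.

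The key step --- and the only place where I expect any real obstacle --- is verifying $A_\alpha \subsetneq G$. Since $\kappa$ is an initial ordinal and $\alpha<\kappa$, we have $|\alpha|<\kappa$, so $A_\alpha$ is a union of fewer than $\kappa$ compact sets. If $A_\alpha$ equaled $G$, then $G$ itself would be a union of fewer than $\kappa$ compact sets, contradicting the minimality of $\kappa$ from \ref{Intro_Number_of_Tims}. (In the edge case $\kappa = \Nbb$, this is just the observation that a noncompact group is not a finite union of compact sets.) Hence the transfinite choice succeeds.

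For the second assertion of the lemma, the definition $\rho_\gamma = \lambda_\gamma^\dag$ gives $\supp(\rho_\gamma) = F_\gamma^{-1}$, so $\supp(l_{t_\gamma} \rho_\gamma) = t_\gamma F_\gamma^{-1}$ and then $\supp(\lambda_\gamma * (l_{t_\gamma} \rho_\gamma)) \subset F_\gamma t_\gamma F_\gamma^{-1}$. The first half makes these supports pairwise disjoint, hence by \ref{Linfty is operators} the net is orthogonal, and Lemma~\ref{Lemma_Product_of_TI_Nets} supplies the two-sided TI-net property.
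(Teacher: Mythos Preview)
Your proposal is correct and follows essentially the same argument as the paper: well-order $\Gamma$ by $\kappa$, run transfinite induction, and at stage $\alpha$ observe that the forbidden set $\bigcup_{\beta<\alpha} F_\alpha^{-1} F_\beta t_\beta F_\beta^{-1} F_\alpha$ is a union of fewer than $\kappa$ compact sets, which cannot cover $G$ by the minimality of $\kappa$. Your treatment of the second assertion (the support computation and the appeal to \ref{Linfty is operators} and Lemma~\ref{Lemma_Product_of_TI_Nets}) is likewise exactly what the lemma statement intends; the only minor point worth making explicit is that $\{l_{t_\gamma}\rho_\gamma\}$ remains a right TI-net because $l_t(g*f) = (l_t g)*f$ and left translation is an $L_1$-isometry.
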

\begin{proof}
Since $|\Gamma| = \kappa$, let $(\Gamma, <)$ denote the well-ordering of $\Gamma$ induced by some bijection with $\kappa$.
Let $t_0 = e$.
As induction hypothesis, suppose $\{F_\gamma t_\gamma F_\gamma^{-1}\}_{\gamma < \alpha}$ are disjoint, where $\alpha\in\Gamma$.
If it is impossible to find $t_\alpha$ such that $\{F_\gamma t_\gamma F_\gamma^{-1}\}_{\gamma \leq \alpha}$ are disjoint, then
$\{F_\alpha^{-1} F_\gamma t_\gamma F_\gamma^{-1} F_\alpha\}_{\gamma < \alpha}$ is a collection of compact sets of cardinality less than $\kappa$ covering $G$, a contradiction.
\end{proof}

\begin{Lemma}\label{L-UpperBoundSizeOfTlim}
Let $K\subset G$ be any compact set with nonempty interior, and $\{K t_\gamma\}_{\gamma < \kappa}$ be a covering of $G$ by translates of $K$.
For each $n$, let $\lambda_n\in\Pcal_1(G)$ be $(K^{-1}, \frac1n)$-left-invariant, and let $\rho_n = \lambda_n^\dag$.
\\Finally, let $X = \left\{ R_{t_\gamma}\rho_n : n\in\Nbb,\ \gamma < \kappa\right\}$. 
Then $\TLIM(G) \subset \cl(\conv(X))$, and $|\cl(\conv(X))| \leq 2^{2^\kappa}$.
\end{Lemma}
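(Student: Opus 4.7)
The lemma has two parts. The inclusion $\TLIM(G)\subseteq\cl(\conv(X))$ will be proved by essentially the same Hahn-Banach argument as Lemma~\ref{Lemma_cl_conv_Xp}, adapted so that a translate $R_t\rho_n$ at an arbitrary $t\in G$ is approximated by one of the countably$\times\kappa$-many ``template'' elements $R_{t_\gamma}\rho_n \in X$. The cardinality bound will then follow from a general fact about the $w^*$-closure of a rational convex hull.

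For the inclusion, I suppose some $m\in\TLIM(G)$ lies outside $\cl(\conv(X))$ and invoke Lemma~\ref{Lemma Hahn Banach} to produce $T\in L_\infty(G,\Rbb)$ with $\langle m,T\rangle > \sup_{\nu\in X}\langle\nu,T\rangle$. Topological left-invariance, together with the fact that $\langle m,h\rangle\leq\sup h$ for real $h$, gives
\[
\langle m,T\rangle = \langle m,\rho_n T\rangle \leq \sup_t \rho_n T(t) = \sup_t\langle R_t\rho_n,T\rangle
\]
for each $n$. Using the covering $\bigcup_\gamma K t_\gamma = G$, I write $t=kt_\gamma$ with $k\in K$, so that $R_t\rho_n = R_{t_\gamma}R_k\rho_n$ by the convention $R_{ab}=R_bR_a$. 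The key computation is the identity $R_s f^\dag = (l_{s^{-1}}f)^\dag$ (obtained directly from the definitions), which converts the $(K^{-1},1/n)$-left-invariance of $\lambda_n$ into
\[
\|R_k\rho_n - \rho_n\|_1 = \|l_{k^{-1}}\lambda_n - \lambda_n\|_1 < 1/n,
\]
since $k^{-1}\in K^{-1}$. Combined with the adjoint identity $\langle R_{t_\gamma}f,T\rangle = \langle f,r_{t_\gamma}T\rangle$ and $\|r_{t_\gamma}T\|_\infty = \|T\|_\infty$, I get $|\langle R_t\rho_n - R_{t_\gamma}\rho_n,T\rangle|<\|T\|_\infty/n$ uniformly in $t$, hence
\[
\langle m,T\rangle \leq \sup_\gamma\langle R_{t_\gamma}\rho_n,T\rangle + \|T\|_\infty/n \leq \sup_{\nu\in X}\langle\nu,T\rangle + \|T\|_\infty/n,
\]
contradicting strict separation as $n\to\infty$.

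For the cardinality bound, note $|X|\leq\aleph_0\cdot\kappa = \kappa$. Let $Y$ be the set of finite convex combinations of elements of $X$ with rational coefficients; then $|Y|\leq\kappa$. Since rational convex combinations are $L_1$-norm dense in real ones, and $L_1$-convergence implies $w^*$-convergence in $\Linfty^*$, one has $\cl(\conv(X))=\cl(Y)$ in the $w^*$-topology. The inclusion $Y\hookrightarrow\Mcal$ extends continuously to the Stone-\v{C}ech compactification, and its image is a $w^*$-compact subset of $\Mcal$ containing $Y$, hence containing $\cl(Y)$. Therefore $|\cl(\conv(X))|\leq|\beta Y|\leq 2^{2^\kappa}$.

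The only subtle point is the translation identity $R_s f^\dag = (l_{s^{-1}}f)^\dag$; without it one cannot extract the needed approximate $R_k$-invariance of $\rho_n$ from the assumed $(K^{-1},1/n)$-left-invariance of $\lambda_n$. Everything else is either structurally parallel to Lemma~\ref{Lemma_cl_conv_Xp} or is routine set-theoretic bookkeeping.
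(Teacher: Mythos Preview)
Your argument is correct and follows essentially the same route as the paper's proof: Hahn--Banach separation, then $\langle m,T\rangle=\langle m,\rho_nT\rangle\le\sup_t\langle R_t\rho_n,T\rangle$, then approximation of $R_t\rho_n$ by $R_{t_\gamma}\rho_n$ using the covering and the $(K^{-1},1/n)$-invariance of $\lambda_n$, and finally the rational-convex-hull density argument for the cardinality bound. The paper compresses your key computation into the single phrase ``by definition of $\rho_n$,'' whereas you spell out the identity $R_sf^\dag=(l_{s^{-1}}f)^\dag$; and the paper invokes the general fact that a regular Hausdorff space with a dense set of size $\kappa$ has cardinality at most $2^{2^\kappa}$, whereas you route through $\beta Y$ --- but these are cosmetic differences, not substantive ones.
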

\begin{proof}
Suppose $m\in \TLIM(G)$ lies outside the closed convex hull of $X$.
As in Lemma~\ref{Lemma P is Dense in M}, there exists $T\in L_\infty(G,\Rbb)$ and $\epsilon > 0$
	with $\langle m, T\rangle > 2\epsilon +  \langle R_{t_\gamma} \rho_n, T\rangle$ for all $n\in\Nbb$ and all $\gamma < \kappa$.
\\Let $n$ be large enough that $\|T\|_\infty / n < \epsilon$.
\\Let $s$ be chosen so that $\sup_t \langle R_t \rho_n, T\rangle < \epsilon + \langle R_{s} \rho_n, T\rangle$. Say $s\in K t_{\alpha}$.
\\By definition of $\rho_n$,
	$\langle R_{s} \rho_n, T\rangle <
  \langle R_{t_\alpha} \rho_n, T\rangle + \|T\|_\infty/n$.
\\Now $\langle m, T\rangle
	= \langle m, \rho_n T\rangle
	\leq \|\rho_n T\|_\infty
	= \sup_t \langle R_t \rho_n, T\rangle
	< 2\epsilon + \langle R_{t_\alpha} \rho_n, T\rangle$ contradicting the choice of $m$.
\\This proves $\TLIM(G) \subset \cl(\conv(X))$.
\\Let $\conv_\Qbb(X)$ denote the set of all finite convex combinations in $X$ with rational coefficients.
Evidently $|\conv_\Qbb(X)| = |X| = \kappa$, and $\cl(\conv(X)) = \cl(\conv_\Qbb(X))$.
Since $\cl(\conv(X))$ is a regular Hausdorff space with dense subset $\conv_\Qbb(X)$, it has cardinality at most $2^{2^{|\conv_\Qbb(X)|}} = 2^{2^\kappa}$.
\end{proof}

\begin{Theorem}
$|\TIM(G)| = |\TLIM(G)| = 2^{2^\kappa}$.
\end{Theorem}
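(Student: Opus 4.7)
The plan is to assemble all the machinery built in the preceding lemmas: we already have an upper bound from Lemma~\ref{L-UpperBoundSizeOfTlim} and an orthogonal two-sided TI-net from Lemma~\ref{Lemma Disjoint Folner}, so the proof reduces to connecting these pieces through Lemma~\ref{Lemma Orthogonal Implies Injection} and Lemma~\ref{Lemma Cardinality of Gamma*}.

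\textbf{Upper bound.} Lemma~\ref{L-UpperBoundSizeOfTlim} already gives $|\TLIM(G)| \leq 2^{2^\kappa}$. Since $\TIM(G)\subseteq\TLIM(G)$, this immediately yields $|\TIM(G)|\leq|\TLIM(G)|\leq 2^{2^\kappa}$, so only the matching lower bound on $|\TIM(G)|$ needs to be produced.

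\textbf{Lower bound.} I would use the net $\{\nu_\gamma\}_{\gamma\in\Gamma} := \{\lambda_\gamma * (l_{t_\gamma}\rho_\gamma)\}$ constructed in Lemma~\ref{Lemma Disjoint Folner}. By that lemma together with \ref{Linfty is operators}, this is an orthogonal family in $\Pcal_1(G)$, and by Lemma~\ref{Lemma_Product_of_TI_Nets} it is a two-sided TI-net. Fix any $p\in\Gamma^*$, i.e.\ any ultrafilter on $\Gamma$ containing every tail $T_\alpha$. Because the tails lie in $p$, and the TI-net property $\|h*\nu_\gamma - \nu_\gamma\|_1\to 0$, $\|\nu_\gamma*h-\nu_\gamma\|_1\to 0$ is witnessed on tails, the functional $m_p := \plim_\gamma \nu_\gamma \in \Linfty^*$ satisfies $\langle m_p, hT\rangle = \langle m_p,T\rangle = \langle m_p, Th\rangle$ for all $h\in\Pcal_1(G)$ and $T\in\Linfty$. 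Thus $m_p \in \TIM(G)$ for every $p\in\Gamma^*$.

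\textbf{Injectivity and counting.} Because $\{\nu_\gamma\}$ is orthogonal, Lemma~\ref{Lemma Orthogonal Implies Injection} tells us the map $\beta\Gamma\to\Linfty^*$, $p\mapsto \plim_\gamma \nu_\gamma$, is one-to-one. Restricting this injection to $\Gamma^*\subset\beta\Gamma$ gives an injection $\Gamma^*\hookrightarrow\TIM(G)$. By Lemma~\ref{Lemma Cardinality of Gamma*}, $|\Gamma^*|=2^{2^\kappa}$, so $|\TIM(G)|\geq 2^{2^\kappa}$. Combined with the upper bound, this yields the equality $|\TIM(G)|=|\TLIM(G)|=2^{2^\kappa}$.

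There is no real obstacle left — all the nontrivial work has been absorbed into the earlier lemmas. The only point that deserves a word of care is the verification that $m_p$ is actually topologically invariant for $p\in\Gamma^*$: this is where one uses that an ultrafilter containing every tail of a directed set realizes convergence of the underlying net, so that the invariance identities $\|h*\nu_\gamma-\nu_\gamma\|_1\to 0$ pass through $\plim_\gamma$.
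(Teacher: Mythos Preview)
Your proposal is correct and follows essentially the same route as the paper: use Lemma~\ref{Lemma Disjoint Folner} to obtain an orthogonal two-sided TI-net, apply Lemma~\ref{Lemma Orthogonal Implies Injection} and Lemma~\ref{Lemma Cardinality of Gamma*} to get $|\TIM(G)|\geq 2^{2^\kappa}$, and invoke Lemma~\ref{L-UpperBoundSizeOfTlim} together with $\TIM(G)\subset\TLIM(G)$ for the upper bound. Your extra remark verifying that $m_p\in\TIM(G)$ for $p\in\Gamma^*$ is a detail the paper leaves implicit, but otherwise the arguments are identical.
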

\begin{proof}
By Lemma~\ref{Lemma Disjoint Folner}, there exists be an orthogonal TI-net $\{\varphi_\gamma\}$.
By Lemma~\ref{Lemma Orthogonal Implies Injection},
the map $p\mapsto \plim_\gamma \varphi_\gamma$ is one-to-one from $\Gamma^*$ to $\TIM(G)$, so $|\TIM(G)| \geq |\Gamma^*|$.
By Lemma~\ref{Lemma Cardinality of Gamma*}, $|\Gamma^*| = 2^{2^\kappa}$.
Of course, $\TIM(G) \subset \TLIM(G)$, so $2^{2^\kappa} \leq |\TIM(G)| \leq |\TLIM(G)|$.
Lemma~\ref{L-UpperBoundSizeOfTlim} yields the opposite inequality.
\end{proof}
\section{A Proof of Paterson's Conjecture} \label{S-PatersonsConjecture}
Let $t^g$ be shorthand for $g t g^{-1}$.
We write $G\in[\FC]^-$ to signify that each conjugacy class $t^G = \{t^g : g\in G\}$ is precompact.
When $G\in [FC]^-$ is furthermore discrete, each conjugacy class must be finite.
In this case, we write $G\in \FC$.

\begin{Lemma} \label{T-FCisAmenable}
If $G\in \FC$, then $G$ is amenable.
\end{Lemma}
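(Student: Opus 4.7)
The plan is to reduce to the finitely generated case, show that every finitely generated $\FC$-group has an abelian subgroup of finite index, and conclude via standard permanence properties of amenability.

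First I would reduce to $G$ finitely generated. A standard Hahn-Banach / ultrafilter patching argument (on the directed set of finitely generated subgroups) shows that a discrete group is amenable iff each of its finitely generated subgroups is amenable. Since the $\FC$ property is obviously inherited by subgroups, it suffices to prove the lemma assuming $G = \langle g_1, \ldots, g_n\rangle$.

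Next I would show that $[G : Z(G)] < \infty$. For each generator $g_i$, the orbit $g_i^G$ is in bijection with $G/C_G(g_i)$, so the centralizer $C_G(g_i)$ has finite index by the $\FC$ hypothesis. The center satisfies $Z(G) = \bigcap_{i=1}^n C_G(g_i)$, a finite intersection of finite-index subgroups, and hence has finite index itself.

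Finally, $Z(G)$ is abelian, so it is amenable by Markov-Kakutani (or simply because any abelian group satisfies F\o{}lner's condition trivially via any increasing exhausting sequence of finite sets if it is finitely generated, and the directed-union fact above handles the general abelian case). Since $[G:Z(G)] < \infty$, $G$ itself is amenable: given a left-invariant mean $\nu$ on $\ell_\infty(Z(G))$ and coset representatives $h_1, \ldots, h_k$ for $Z(G)$ in $G$, the formula $m(f) = \frac{1}{k}\sum_{j=1}^k \nu\bigl(x \mapsto f(h_j x)\bigr)$ produces a left-invariant mean on $\ell_\infty(G)$.

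There is no serious obstacle; the lemma is really a folkloric consequence of B.~H.~Neumann's center-by-finite theorem for finitely generated $\FC$-groups. The only mildly nontrivial ingredient is the directed-union reduction, but it is entirely standard for discrete amenability, and the centralizer/center computation is elementary.
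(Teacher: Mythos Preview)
Your proposal is correct and follows essentially the same route as the paper: reduce to finitely generated subgroups, use orbit--stabilizer to show each centralizer $C_G(g_i)$ has finite index, intersect over the generators to get $[G:Z(G)]<\infty$, and conclude amenability from the abelian-by-finite structure. The only differences are cosmetic---you spell out the averaging-over-cosets construction of the invariant mean explicitly, whereas the paper simply cites the permanence of amenability under finite extensions.
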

\begin{proof}
It suffices to show that each finitely-generated subgroup is amenable.
Suppose $K\subset G$ is finite, and let $\langle K \rangle$ denote the subgroup generated by $K$.
For any $x\in \langle K\rangle$, let $C(x) = \{y \in \langle K\rangle : x^y = x\}$.
Evidently $C(x)$ is a subgroup, whose right cosets correspond to the (finitely many) distinct values of $x^y$.
Therefore $[\langle K\rangle : C(x)]$ is finite.
Let $Z$ denote the center of $\langle K\rangle$.
Clearly $Z = \bigcap_{x \in K} C(x)$.
Thus $[\langle K\rangle: Z] \leq \prod_{x\in K} [\langle K\rangle : C(x)]$ is finite.
Since $\langle K\rangle$ is finite-by-abelian, it is amenable.
\end{proof}

\begin{Theorem} \label{T-Structure}
$G\in[\FC]^-$ iff $G$ is a compact extension of $\Rbb^n \times D$, for some $D\in\FC$ and $n\in\Nbb$.
\end{Theorem}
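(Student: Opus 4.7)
The plan is to prove the two directions separately. The easy $(\Leftarrow)$ direction: in $\Rbb^n\times D$, every conjugacy class has the form $\{v\}\times d^D$, a finite set since $D\in\FC$. If $K\normal G$ is compact with $G/K\cong \Rbb^n\times D$, then the image of any $g^G$ in $G/K$ is a finite conjugacy class, so $g^G$ is contained in a finite union of cosets of $K$ and is therefore precompact.

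For $(\Rightarrow)$, I would invoke the classical structure theory of locally compact groups. Let $G_0$ denote the identity component, a closed normal $[\FC]^-$ subgroup. By the Grosser--Moskowitz structure theorem for connected $[\FC]^-$ groups, $G_0\cong \Rbb^n\times K_0$ with $K_0$ compact connected. The factor $K_0$ is the unique maximal compact subgroup of $G_0$, hence characteristic in $G_0$ and normal in $G$; after replacing $G$ by $G/K_0$ we may assume $G_0=\Rbb^n$. To see $\Rbb^n$ is central in $G$, let $\alpha: G/\Rbb^n\to\mathrm{GL}(n,\Rbb)$ record the conjugation action. For $g\in G$ lifting $h\in G/\Rbb^n$ and $v\in\Rbb^n$, a direct computation gives $vgv^{-1}=g+(I-\alpha(h))v$; if $\alpha(h)\neq I$ the image of $I-\alpha(h)$ is nontrivial, so varying $v$ produces an unbounded conjugacy class, contradicting $[\FC]^-$. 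Hence $\alpha\equiv I$ and $\Rbb^n$ is central.

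Consequently $H:=G/\Rbb^n$ is a totally disconnected $[\FC]^-$ group, and by van Dantzig's theorem together with the fact that a tdlc $[\FC]^-$ group admits a compact open normal subgroup, $H$ contains some $K_1\normal H$ compact and open; then $D:=H/K_1$ is discrete $[\FC]^-$, i.e.\ $D\in\FC$. Let $\widetilde K_1\normal G$ be the preimage of $K_1$. Because $\Rbb^n$ is central and admits no nontrivial compact subgroup, $\widetilde K_1$ has a unique maximal compact subgroup $L$, which is characteristic in $\widetilde K_1$ and hence normal in $G$. In $G/L$ the identity component is the open copy of $\Rbb^n$, and $G/L$ fits into a central extension $0\to\Rbb^n\to G/L\to D\to 0$.

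The main obstacle is the final splitting: one must show this central extension is trivial, yielding $G/L\cong \Rbb^n\times D$ with $L$ the witnessing compact normal subgroup. Equivalently, a defining $2$-cocycle $\omega: D\times D\to\Rbb^n$ must be a coboundary. The $[\FC]^-$ property applied to commutators in $G/L$ forces the values of $\omega$ on each $D$-conjugation orbit in $D\times D$ (orbits which are finite since $D\in\FC$) to be bounded in $\Rbb^n$, after which averaging inside $\Rbb^n$ over each orbit produces a trivialising cochain. This cohomological splitting step, together with the extraction of a compact open normal subgroup in the totally disconnected part, are the technically delicate portions of the proof.
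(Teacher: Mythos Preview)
The paper does not prove this theorem at all: its entire proof is the single line ``This is \cite[Theorem 2.2]{liukkonen}.'' So you are not reproducing the paper's argument but rather attempting to sketch the underlying structure theory that the paper imports wholesale.

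Your $(\Leftarrow)$ direction is correct, and the reduction steps in $(\Rightarrow)$ --- identifying $G_0\cong\Rbb^n\times K_0$, killing $K_0$, showing $\Rbb^n$ is central via the $(I-\alpha(h))v$ computation, splitting off a compact $L$ from $\widetilde K_1$ using that $K_1$ is profinite and $\Rbb^n$ is uniquely divisible --- are all sound, modulo the two points you yourself flag as delicate (existence of a compact open \emph{normal} subgroup in a totally disconnected $[\FC]^-$ group, and the final splitting).

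The genuine gap is the final step. Your averaging argument does not work as stated. You claim that $[\FC]^-$ forces $\omega$ to be bounded on each $D$-conjugation orbit in $D\times D$; but those orbits are finite (since $D\in\FC$), so boundedness there is automatic and carries no information from $[\FC]^-$. More importantly, averaging a $2$-cocycle over finite conjugation orbits does not produce a trivialising $1$-cochain: the cocycle identity is not equivariant for that action in a way that makes the averaged function a coboundary. What $[\FC]^-$ actually controls is the commutator pairing $(\bar g,\bar h)\mapsto [g,h]\bmod [E,E]\cap\Rbb^n$, i.e.\ the antisymmetrisation of $\omega$; precompactness of conjugacy classes forces, for each fixed $\bar g$, the set $\{\omega(\bar h,\bar g)-\omega(\bar g,\bar h):\bar h\in C_D(\bar g)\}$ to be bounded in $\Rbb^n$. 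Turning this into an honest trivialisation of $[\omega]\in H^2(D,\Rbb^n)$ (even after passing to a finite quotient of $D$, which is all the theorem requires) is exactly the substantive content of the Grosser--Moskowitz/Liukkonen structure theory, and it does not reduce to a one-line averaging trick. Concretely, the ``real Heisenberg over $\Zbb^2$'' example you should test against --- upper-triangular unipotent matrices with integer $(1,2),(2,3)$ entries and real $(1,3)$ entry --- is a non-split central extension $0\to\Rbb\to E\to\Zbb^2\to 0$ that fails $[\FC]^-$; your argument must use $[\FC]^-$ in a way that genuinely distinguishes it from this example, and the orbit-averaging sketch does not.

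In short: your outline is the right skeleton, but the cohomological splitting is where the real work lives, and the paper (wisely) outsources that work to \cite{liukkonen} rather than redoing it.
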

\begin{proof}
This is \cite[Theorem 2.2]{liukkonen}.
\end{proof}

\begin{Corollary}
If $G\in [\FC]^-$, then $G$ is amenable.
\end{Corollary}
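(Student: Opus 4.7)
The plan is to combine the structure theorem (Theorem~\ref{T-Structure}) with Lemma~\ref{T-FCisAmenable} and standard closure properties of the class of amenable groups. By the structure theorem, there exists a compact normal subgroup $N \normal G$ such that $G/N \cong \Rbb^n \times D$ for some $n\in\Nbb$ and some $D\in\FC$. Amenability will then follow by showing that each of $N$, $\Rbb^n$, $D$ is amenable and that amenability is closed under the two operations appearing here (direct products and group extensions).

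First I would note that compact groups are amenable (normalized Haar measure is an invariant mean), so $N$ is amenable. Next, $\Rbb^n$ is amenable because it is abelian (Markov-Kakutani, or simply a F\o{}lner sequence of balls). By Lemma~\ref{T-FCisAmenable}, $D$ is amenable. The direct product $\Rbb^n \times D$ is then amenable because the product of two amenable locally compact groups is amenable: if $m_1, m_2$ are topologically left invariant means on the factors, then $\langle m_1\otimes m_2, f\rangle := \langle m_1, x\mapsto \langle m_2, y\mapsto f(x,y)\rangle\rangle$ defines one on the product.

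Finally, I would invoke the fact that amenability is preserved under extensions: if $N\normal G$ with $N$ and $G/N$ both amenable, then $G$ is amenable. Applied to our compact normal $N$ with quotient $\Rbb^n\times D$, this gives amenability of $G$.

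The argument has no real obstacle, since each ingredient is either cited (Theorem~\ref{T-Structure}, Lemma~\ref{T-FCisAmenable}) or standard. The only point worth stating carefully is the extension fact; since it is textbook, a one-line reference suffices.
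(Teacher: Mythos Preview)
Your proposal is correct and follows essentially the same route as the paper: invoke Theorem~\ref{T-Structure} to write $G$ as a compact extension of $\Rbb^n\times D$, use Lemma~\ref{T-FCisAmenable} (together with amenability of abelian groups and closure under products) to see the quotient is amenable, and then appeal to closure of amenability under extensions. The paper's proof is just a terser version of exactly this argument.
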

\begin{proof}
In light of Theorem~\ref{T-FCisAmenable}, any group of the form $\Rbb^n \times D$ is amenable.
\\Compact extensions of amenable groups are amenable.
\end{proof}

\begin{Corollary} \label{C-FCisUnimodular}
If $G\in [\FC]^-$, then $G$ is unimodular.
\end{Corollary}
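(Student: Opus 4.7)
The plan is to deduce unimodularity from Theorem~\ref{T-Structure} together with the standard fact that a compact normal extension of a unimodular group is unimodular.

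First, apply Theorem~\ref{T-Structure} to obtain a compact normal subgroup $K\subset G$ with $G/K\cong\Rbb^n\times D$ for some $n\in\Nbb$ and $D\in\FC$. Since $\Rbb^n$ is abelian and $D$ is discrete, both factors are unimodular, so their product $G/K$ is as well.

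Second, I claim that for any compact normal subgroup $K$ of $G$ one has $\triangle_G=\triangle_{G/K}\circ\pi$, where $\pi\colon G\to G/K$ is the quotient map. This is a routine application of Weil's integration formula. Normalize $\mu_G,\mu_K,\mu_{G/K}$ so that $\int_G f\,d\mu_G=\int_{G/K}\int_K f(xk)\,d\mu_K(k)\,d\mu_{G/K}(xK)$ for all $f\in C_c(G)$. Because $K$ is compact, $\mu_K$ is finite and hence is preserved by conjugation by any element of $G$ (any continuous automorphism rescales Haar by a positive constant, which is forced to be $1$ since $\mu_K(K)<\infty$). Now compare the effect of right-translation by $h\in G$ on both sides of Weil's formula: on the left one picks up a factor of $\triangle_G(h)^{-1}$, and inside the inner integral one substitutes $k\mapsto h^{-1}kh$ to pull the $h$ past $\mu_K$; the remaining outer integral is then the right-translate by $hK$ of the descended function $\bar F(xK):=\int_K f(xk)\,d\mu_K(k)$, producing a factor of $\triangle_{G/K}(hK)^{-1}$. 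Matching the two gives $\triangle_G(h)=\triangle_{G/K}(hK)$.

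Combining the two steps yields $\triangle_G\equiv 1$. The only technical work lies in step two, whose main obstacle is bookkeeping the normalizations in Weil's formula; the key substitution $k\mapsto h^{-1}kh$ is permissible precisely because compactness of $K$ forces $\mu_K$ to be invariant under conjugation by every element of $G$.
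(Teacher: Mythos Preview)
Your proof is correct and follows essentially the same route as the paper: invoke Theorem~\ref{T-Structure}, note that $\Rbb^n\times D$ is unimodular, and then use that a compact (normal) extension of a unimodular group is unimodular. The paper simply asserts this last fact in one line, whereas you spell out the Weil-formula computation showing $\triangle_G=\triangle_{G/K}\circ\pi$; your added detail is sound and the key point---that compactness of $K$ forces $\mu_K$ to be invariant under conjugation by elements of $G$---is exactly what makes the argument go through.
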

\begin{proof}
Clearly groups of the form $\Rbb^n \times D$ are unimodular, since $D$ is discrete.
\\Compact extensions of unimodular groups are unimodular.
\end{proof}

\begin{Lemma}\label{Lemma_CG_is_precompact}
If $G\in[\FC]^-$, and $C\subset G$ is compact, then $C^G = \{ c^g : c\in C, g\in G\}$ is precompact.

In \cite{milnes}, Milnes obverves that this lemma would imply Theorem~\ref{Theorem If FC Bar then...}.
He is unable to prove it, apparently because he is unaware of Theorem~\ref{T-Structure}.
\end{Lemma}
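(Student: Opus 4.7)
The plan is to invoke the structure Theorem~\ref{T-Structure} to reduce the problem to the direct product $\Rbb^n \times D$, in which conjugation takes a particularly simple form because the Euclidean factor is central.

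First, I would apply Theorem~\ref{T-Structure} to obtain a compact normal subgroup $N$ of $G$ and a quotient map $\pi: G \to G/N$ where $G/N \cong \Rbb^n \times D$ for some $n\in\Nbb$ and $D\in\FC$. Since $\pi(c^g) = \pi(c)^{\pi(g)}$ and $\pi$ is surjective, $\pi(C^G) = \pi(C)^{G/N}$. I would then argue that it suffices to prove $\pi(C^G)$ is precompact in $G/N$: if $W\subset G/N$ is a compact set containing $\pi(C^G)$, a standard lifting argument (cover $W$ by finitely many translates of $\pi(U)$ for a compact neighborhood $U$ of $e\in G$, and lift) produces a compact $L\subset G$ with $\pi(L)\supseteq W$, so $C^G\subseteq \pi^{-1}(W)\subseteq LN$, which is compact.

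Second, I would analyze conjugacy classes inside $\Rbb^n \times D$. The compact set $\pi(C)$ projects onto a finite subset $F\subset D$ (since $D$ is discrete) and onto a compact set $V\subset\Rbb^n$, so $\pi(C)\subseteq V\times F$. Because $\Rbb^n$ is central in the direct product, conjugation reduces to
\[ (w,e)(v,d)(w,e)^{-1} = (v,\, ede^{-1}), \]
so $\pi(C)^{G/N} \subseteq (V\times F)^{G/N} = V\times F^D$. Since $D\in\FC$, each conjugacy class $d^D$ is finite, and $F$ itself is finite, so $F^D = \bigcup_{d\in F} d^D$ is a finite subset of $D$. Hence $V\times F^D$ is compact and contains $\pi(C)^{G/N}$, which is therefore precompact.

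The main obstacle is really just the invocation of Theorem~\ref{T-Structure}; once $G$ is known to be a compact extension of $\Rbb^n\times D$, the computation of conjugacy classes becomes essentially mechanical, since the Euclidean coordinate never moves and the finitely many discrete coordinates produce only finitely many $D$-conjugates. As Milnes observed, this structural input is exactly what was missing from his own attempt. Everything else---the lifting of compact sets through a quotient by a compact normal subgroup, and the reduction to $G/N$---is standard locally compact group theory.
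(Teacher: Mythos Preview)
Your proof is correct and follows essentially the same route as the paper: apply the structure theorem to pass to $G/N \cong \Rbb^n \times D$, observe that precompactness pulls back through a quotient by a compact normal subgroup, and then use that conjugation in $\Rbb^n \times D$ fixes the abelian coordinate while moving the $D$-coordinate through only finitely many values. The only cosmetic issue is your conjugation display, where you use $e$ as a running element of $D$ rather than the identity---write $(w,h)(v,d)(w,h)^{-1} = (v,\,hdh^{-1})$ instead to avoid confusion.
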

\begin{proof}
By Theorem~\ref{T-Structure}, let $G/K = \Rbb^n \times D$, where $K\normal G$ is a compact normal subgroup.
\\Let $\pi: G\to G/K$ denote the canonical epimorphism.
\\Pick $C\subset G$ compact.
Notice $C^G$ is precompact if $\pi(C^G)$ is, because the kernel of $\pi$ is compact.
\\Since $\pi(C)$ is compact, $\pi(C) \subset B \times F$ for some box $B\subset\Rbb^n$ and finite $F\subset D$.
\\Now $\pi(C^G) = \pi(C)^{\pi(G)} \subset (B\times F)^{\Rbb^n \times D} = B\times F^D$.
\\Evidently $F^D$ is finite, hence $B\times F^D$ is compact, proving $\pi(C^G)$ is precompact.
\end{proof}
\begin{point}\label{Symmetric_Folner_Net}
As in \ref{Definition_Folner_Net}, let $\{F_\gamma\}_{\gamma\in\Gamma}$ be a F\o{}lner net for $G$, and $\lambda_\gamma = 1_{F_\gamma} / |F_\gamma|$ the corresponding left TI-net.
Assuming $G$ is unimodular, by \cite[Theorem 4.4]{Chou70} we can choose $F_\gamma$ to be symmetric.
Hence $\lambda_\gamma(x) = \lambda_\gamma(x^{-1}) = \lambda_\gamma^\dag(x)$, and $\{\lambda_\gamma\}$ is a two-sided TI-net.
\end{point}

\begin{Theorem}[Following Milnes]\label{Theorem If FC Bar then...}
If $G\in[\FC]^-$ then $\TLIM(G) \subset \TIM(G)$.
\end{Theorem}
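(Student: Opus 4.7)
The plan is to combine Lemma~\ref{Lemma_cl_conv_Xp} with the symmetric F\o{}lner net $\{\lambda_\gamma\}$ from \ref{Symmetric_Folner_Net}, which is available because Corollary~\ref{C-FCisUnimodular} tells us $G$ is unimodular. Since Lemma~\ref{Lemma_cl_conv_Xp} gives $\TLIM(G) = \cl(\conv(X_p))$ with $X_p = \{\plim_\gamma R_{t_\gamma}\lambda_\gamma : \{t_\gamma\}\in G^\Gamma\}$, and since $\TIM(G)$ is a $w^*$-closed convex subset of $\LinftyS$, it will suffice to show $X_p \subset \TIM(G)$ for some single $p\in\Gamma^*$. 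Each $\nu \in X_p$ already lies in $\TLIM(G)$ by the lemma, so the only thing left to establish is topological right invariance of $\nu$. I would actually prove the stronger statement that $\{R_{t_\gamma}\lambda_\gamma\}$ is itself a right TI-net, so that any $p$-limit inherits the property.

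The key reduction exploits that in a unimodular group $R_s = r_{s^{-1}}$ and that a direct change of variables yields the intertwining identity $(r_s h) * f = r_s(h * f^s)$, where $f^s(u) := f(sus^{-1})$. Applying this with $s = t_\gamma^{-1}$ and $h = \lambda_\gamma$, and using that $r_s$ is an $L_1$-isometry, I get
\[
\|R_{t_\gamma}\lambda_\gamma * f - R_{t_\gamma}\lambda_\gamma\|_1 \;=\; \|\lambda_\gamma * f^{t_\gamma^{-1}} - \lambda_\gamma\|_1
\]
for every $f\in\Pcal_1(G)$. So translating $\lambda_\gamma$ on the right by $t_\gamma$ and then convolving by $f$ is, up to an isometry, the same as first conjugating $f$ by $t_\gamma^{-1}$ and then convolving by the untranslated $\lambda_\gamma$.

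The hard part, and the only place the $[\FC]^-$ hypothesis genuinely enters, is controlling the varying supports $\supp(f^{t_\gamma^{-1}}) = t_\gamma\supp(f)t_\gamma^{-1}$. Lemma~\ref{Lemma_CG_is_precompact} is tailor-made for this: it confines all of these conjugates to the single compact set $K := \overline{\supp(f)^G}$. Then for any $h\in\Pcal_1(G)$ supported in $K$, the standard F\o{}lner estimate
\[
\|\lambda_\gamma * h - \lambda_\gamma\|_1 \;\leq\; \sup_{u\in K}\|r_{u^{-1}}\lambda_\gamma - \lambda_\gamma\|_1
\]
applies, and because $\{\lambda_\gamma\}$ is a two-sided F\o{}lner net (per \ref{Symmetric_Folner_Net}) the supremum over this fixed compact set tends to $0$ once $\gamma$ dominates $K^{-1}$ in $\Gamma$. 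Taking $h = f^{t_\gamma^{-1}}$ for each $\gamma$ then gives $\|R_{t_\gamma}\lambda_\gamma * f - R_{t_\gamma}\lambda_\gamma\|_1\to 0$, so every $\nu\in X_p$ is topologically right invariant, and $\TLIM(G)\subset \TIM(G)$ follows.
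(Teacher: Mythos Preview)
Your proposal is correct and follows essentially the same route as the paper: reduce via Lemma~\ref{Lemma_cl_conv_Xp} to showing each $\{R_{t_\gamma}\lambda_\gamma\}$ is a right TI-net, transfer the translation $t_\gamma$ into a conjugation of the test element, and invoke Lemma~\ref{Lemma_CG_is_precompact} to trap all conjugates in one compact set so the F\o{}lner property finishes the job. The only cosmetic difference is that the paper works pointwise with $\|r_x R_{t_\gamma}\lambda_\gamma - R_{t_\gamma}\lambda_\gamma\|_1 = \|l_{x_\gamma}\lambda_\gamma - \lambda_\gamma\|_1$ (via the explicit symmetric-difference formula) and then integrates as in Lemma~\ref{Lemma lambda_gamma is left TI-net}, whereas you package the same conjugation in the convolution identity $(r_s h)*f = r_s(h*f^s)$ and estimate directly; you should mention the routine reduction to compactly supported $f$ before applying Lemma~\ref{Lemma_CG_is_precompact}.
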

\begin{proof}
By Corollary~\ref{C-FCisUnimodular}, $G$ is unimodular.
As above, take $\{\lambda_\gamma\} = \{\lambda_\gamma^\dag\}$ to be a TI-net.
\\Recall Lemma~\ref{Lemma_cl_conv_Xp}, which says $\cl(\conv(X_p)) = \TLIM(G)$.
So it suffices to prove $X_p \subset \TIM(G)$.
\\To this end, we will show $\{R_{t_\gamma} \lambda_\gamma\}$ is a right TI-net for any $\{t_\gamma\}\in G^\Gamma$.
Let $x_\gamma$ be shorthand for $t_\gamma x t_\gamma^{-1}$.
\\Now $\|r_x R_{t_\gamma} \lambda_\gamma - R_{t_\gamma} \lambda_\gamma \|_1
	= \frac{|F_\gamma t_\gamma x^{-1} \Delta F_\gamma t_\gamma|}{|F_\gamma t_\gamma|}
	= \frac{|F_\gamma x_\gamma^{-1} \Delta F_\gamma|}{|F_\gamma|}
	= \| r_{x_\gamma} \lambda_\gamma - \lambda_\gamma \|_1
	= \| l_{x_\gamma} \lambda_\gamma - \lambda_\gamma \|_1$.
\\For any compact $C\subset G$, notice $\{x_\gamma : x\in C, \gamma\in\Gamma\} \subset C^G$, which is precompact by Lemma~\ref{Lemma_CG_is_precompact}.
\\Hence $\sup_{x\in C} \|r_x R_{t_\gamma} \lambda_\gamma - R_{t_\gamma} \lambda_\gamma \|_1
	\leq \sup_{y\in C^G} \|l_y \lambda_\gamma - \lambda_\gamma\|\to 0$.
\\By the same argument as Lemma~\ref{Lemma lambda_gamma is left TI-net},
$\|(R_{t_\gamma} \lambda_\gamma)* f- R_{t_\gamma} \lambda_\gamma\|_1 \to 0$ for any $f\in\Pcal_1(G)$.
\end{proof}

\begin{Lemma}
If $x^G$ is not precompact, then there is a $\sigma$-compact open subgroup $H\leq G$ such that $x^H$ is not precompact.
\end{Lemma}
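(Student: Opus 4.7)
The plan is to construct $H$ directly as a nested union $H = \bigcup_n L_n$ of compact symmetric sets, chosen so that $H$ is automatically an open subgroup while each $L_n$ fails to contain some conjugate of $x$. First I would fix a compact symmetric neighborhood $V$ of $e$ with $x \in V$, and set $L_1 = V$. Inductively, since $x^G$ is not precompact we have $x^G \not\subset L_n$, so one may pick $h_n \in G$ with $h_n x h_n^{-1} \notin L_n$, and then define $L_{n+1} = (L_n \cup \{h_n, h_n^{-1}\})^2$, which is again compact and symmetric and contains $L_n$. Finally set $H = \bigcup_n L_n$.

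The routine verifications come next: $H$ is closed under multiplication since $L_n \cdot L_n \subset L_{n+1}$, and closed under inversion since each $L_n$ is symmetric; $H$ is open because it contains the neighborhood $V^\circ$ of $e$; and $H$ is $\sigma$-compact by construction. Because every open subgroup is also closed and $x \in H$, we have $x^H \subset H$, so $x^H$ is precompact in $G$ iff it is precompact inside the closed set $H$.

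The heart of the argument is showing $x^H$ is not precompact. Because $V \subset L_n$ for every $n$, the set $L_n V^\circ$ is open, contains $L_n$, and sits inside $L_n L_n \subset L_{n+1}$; hence $L_n \subset L_{n+1}^\circ$, so $\{L_n^\circ\}$ is an increasing open cover of $H$ and any compact subset of $H$ lies in some $L_N$. If one had $x^H \subset L_N$, then from $h_N \in L_{N+1} \subset H$ one would get $h_N x h_N^{-1} \in x^H \subset L_N$, contradicting the choice of $h_N$.

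The only delicate point is the last one: arranging that the $L_n$ form an exhaustion by open sets of $H$. This is the reason for building the full neighborhood $V$ into every $L_n$ rather than merely taking $L_{n+1} = L_n \cup \{h_n, h_n^{-1}\}^2$; without the presence of a neighborhood of $e$ inside each $L_n$, the contradiction at stage $N$ only excludes $L_N$ itself, not every compact subset of $H$, and the argument would collapse.
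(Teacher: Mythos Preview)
Your proof is correct and follows essentially the same strategy as the paper: inductively choose conjugating elements so that the resulting conjugates of $x$ escape larger and larger compact sets, and let $H$ be the open $\sigma$-compact subgroup generated by these elements together with a fixed compact neighborhood of $e$. The only cosmetic difference is that the paper arranges the conjugates $x^{t_n}$ to be $U$-separated (hence a sequence with no accumulation point), whereas you build the compact exhaustion $\{L_n\}$ of $H$ explicitly and pick each $h_n$ so that $x^{h_n}\notin L_n$; both devices yield the same conclusion.
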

\begin{proof}
Let $U$ be any compact neigborhood of $t_0 = e$.
Inductively pick $t_{n+1}$ such that $x^{t_{n+1}}U \cap \{x^{t_0}, \hdots, x^{t_n}\}U = \varnothing$.
This is possible, otherwise $x^G$ is contained in the compact set $\{x^{t_0}, \hdots, x^{t_n}\} U U^{-1}$, a contradiction.
Now the sequence $\{x^{t_0}, x^{t_1}, \hdots\}$ does not accumulate anywhere, so it escapes any compact set.
Take $H$ to be the subgroup generated by $\{x^{t_0}, x^{t_1}, \hdots\} U$.
\end{proof}

\begin{Theorem}\label{Theorem Paterson Converse}
If $G$ has an element $x$ such that $x^G$ is not precompact, then $\TLIM(G) \neq \TIM(G)$.
\end{Theorem}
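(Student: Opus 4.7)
Assuming $G$ is amenable (else $\TLIM(G) = \TIM(G) = \varnothing$ and there is nothing to prove), I would use the preceding lemma to select a $\sigma$-compact open subgroup $H \leq G$ with $x^H$ not precompact. The plan is to construct $m_H \in \TLIM(H) \setminus \TRIM(H)$ by engineering translates of a F\o{}lner sequence in $H$ (in the spirit of Lemma~\ref{Lemma Disjoint Folner}), then induce $m_H$ up to a mean $\tilde m \in \TLIM(G) \setminus \TRIM(G)$ using a $G$-invariant mean on $G/H$.

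In $H$, fix a F\o{}lner sequence $\{F_n\}$, let $\lambda_n = 1_{F_n}/|F_n|$, and fix a compact symmetric neighborhood $V \ni e$. Recursively choose $t_n \in H$ so that, writing $y_n := x^{t_n}$:
(a) $y_n \notin F_n^{-1} F_n V^{-1}$ (equivalently, $F_n y_n t_n V \cap F_n t_n = \varnothing$); and
(b) $F_n t_n$ and $F_n y_n t_n V$ are disjoint from $\bigcup_{m < n}(F_m t_m \cup F_m y_m t_m V)$.
Condition (b) requires $t_n$ to avoid a compact set $K_n \subset H$; were (a) also impossible throughout $H\setminus K_n$, then $x^H \subset x^{K_n} \cup F_n^{-1}F_n V^{-1}$ would be compact, contradicting the choice of $H$. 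Set $A := \bigsqcup_n F_n t_n$, $T_H := 1_A$, and $\phi_H := 1_{xV}/|V|$. By (a)--(b) we have $\langle R_{t_n}\lambda_n, T_H\rangle = 1$ for every $n$, while $\supp(R_{t_n}\lambda_n * \phi_H) \subset F_n t_n \cdot xV = F_n y_n t_n V$ is disjoint from $A$, giving $\langle R_{t_n}\lambda_n, T_H\phi_H\rangle = \langle R_{t_n}\lambda_n * \phi_H, T_H\rangle = 0$. For any $p \in \Nbb^*$ the limit $m_H := \plim_n R_{t_n}\lambda_n$ therefore lies in $\TLIM(H)$ and satisfies $\langle m_H, T_H - T_H\phi_H\rangle = 1$, so $m_H \notin \TRIM(H)$.

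To pass from $H$ to $G$, take $\eta$ a $G$-invariant mean on $\ell_\infty(G/H)$ (which exists because amenability of $G$ descends to any homogeneous $G$-space) and set $\tilde m(f) := \eta\bigl(gH \mapsto m_H(h \mapsto f(gh))\bigr)$. Standard checks -- the $G$-invariance of $\eta$, the left-invariance of $m_H$, and expressing convolutions as weighted averages of translations -- give $\tilde m \in \TLIM(G)$. To witness $\tilde m \notin \TRIM(G)$, extend $T_H$ periodically across cosets by $T(g_\alpha h) := T_H(h)$ for $g_\alpha$ in a chosen transversal and $h \in H$, and extend $\phi_H$ by zero outside $H$. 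Using the left-invariance of $m_H$, the functions $gH \mapsto m_H(h \mapsto T(gh))$ and $gH \mapsto m_H(h \mapsto T\phi(gh))$ are constant on $G/H$ with values $m_H(T_H) = 1$ and $m_H(T_H\phi_H) = 0$ respectively, yielding $\tilde m(T) - \tilde m(T\phi) = 1$. The main obstacle is less the construction than the extension step: verifying $\tilde m \in \TLIM(G)$ and computing the test-pairings cleanly requires a careful run through the $G/H$ machinery, though no new ideas beyond amenability of homogeneous spaces are needed.
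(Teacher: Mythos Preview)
There is a genuine slip in condition (a). Your claimed equivalence $y_n \notin F_n^{-1}F_nV^{-1} \Longleftrightarrow F_n y_n t_n V \cap F_n t_n = \varnothing$ is false for nonabelian $H$: writing it out, the right-hand side is equivalent to $y_n \notin F_n^{-1}F_n\,(t_n V^{-1} t_n^{-1})$, and the conjugate $t_n V^{-1} t_n^{-1}$ moves with $t_n$. Consequently your compactness argument (``$x^H \subset x^{K_n}\cup F_n^{-1}F_nV^{-1}$'') no longer shows the corrected condition is achievable. The repair is easy: drop the smearing by $V$ and test right-invariance with the point translate $r_x$ rather than with $T_H\phi_H$. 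Then (a) becomes $y_n \notin F_n^{-1}F_n$, your recursion goes through verbatim, and $m_H(1_A)=1$ while $m_H(1_{Ax^{-1}})=0$, so $m_H\in\TLIM(H)\setminus\TIM(H)$.

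Your global strategy---solve in $H$, then induce via an invariant mean on $\ell_\infty(G/H)$---differs from the paper's and is where the real work lies. The paper never builds $m_H$ at all: it takes a F\o{}lner net $\{F_\gamma\}$ for $G$, uses the transversal projection $\pi:G\to H$ (well-defined and continuous since $H$ is open), observes that each compact $\pi[F_\gamma]$ sits inside some member $H_{n(\gamma)}$ of a fixed exhaustion of $H$, and then right-translates by $c_{n(\gamma)}\in H$ chosen exactly as in your recursion. This forces $F_\gamma c_{n(\gamma)}\subset TA$ and $F_\gamma c_{n(\gamma)}x\subset TB$ for disjoint $A,B\subset H$, so any $p$-limit lies in $\TLIM(G)$ and separates $1_{TA}$ from its right translate. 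What this buys is the complete avoidance of your induction step, which you rightly flag as the main obstacle: checking that $\tilde m\in\TLIM(G)$ requires interchanging $m_H$ with an $L_1(G)$-integral, and since $r\mapsto[h\mapsto T(rgh)]$ is not norm-continuous into $L_\infty(H)$, this interchange is not routine. Your approach is more modular, but the paper's is both shorter and sidesteps exactly the verification you anticipate being delicate.
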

\begin{proof}
Let $H \leq G$ be any $\sigma$-compact open subgroup such that $x^H$ is not precompact.
Pick a sequence of compact sets $\{H_n\}$ such that
$\forall n$ $H_n \subset H_{n+1}^\circ$,
and $\bigcup_n H_n = H$.
It follows that any compact $K\subset H$ is contained in some $H_n$.

Inductively construct a sequence $\{c_n\}\subset H$ satisfying the following properties:
\\(1) $c_n \in H\setminus K_n$, where $K_n = \bigcup_{m < n} H_n^{-1} H_m c_m \{x, x^{-1}\}$.
\\(2) $(c_n x c_n^{-1}) \not\in H_n^{-1} H_n$.
\\Since $K_n$ is compact, $x^{H \setminus K_n}$ is not precompact, and in particular escapes $H_n^{-1} H_n$.
\\Thus it is possible to satisfy (1) and (2) simultaneously.

Now $A = \bigcup_n H_n c_n$ and $B = \bigcup_n H_n c_n x$ are easily seen to be disjoint:
\\If they are not disjoint, then $H_n c_n x \cap H_m c_m \neq \varnothing$ for some $n,m$.
\\If $n > m$, then $c_n \in H_n^{-1} H_m c_m x^{-1}$, violating (1).
\\If $n < m$, then $c_m \in H_m^{-1} H_n c_n x$, violating (1).
\\If $n=m$, then $c_n x c_n^{-1} \in H_n^{-1} H_n$, violating (2).

Let $T$ be a transversal for $G/H$.
Notice $TA \cap TB = \varnothing$, since $A,B$ are disjoint subsets of $H$.
\\Define $\pi: TH \to H$ by $\pi(th) = h$, which is continuous since $H$ is open.
\\Let $\{F_\gamma\}$ be a F\o{}lner net for $G$.
Each $F_\gamma$ is compact, hence $\pi[F_\gamma] \subset H_{n(\gamma)}$ for some $n(\gamma)$.
Let $c_\gamma = c_{n(\gamma)}$.
Now $\pi[F_\gamma c_\gamma] = \pi[F_\gamma]c_\gamma \subset A$, hence $F_\gamma c_\gamma \subset \pi^{-1}[A] = TA$.
Likewise $F_\gamma c_\gamma x \subset TB$.
\\Since this holds for each $\gamma$, $C = \bigcup_\gamma F_\gamma t_\gamma \subset TA$ and $D = \bigcup_\gamma F_\gamma t_\gamma x \subset TB$.
Thus $C \cap D = \varnothing$.
\\Let $m = \plim_\gamma\left[R_{t_\gamma} \frac{1_{F_\gamma}}{|F_\gamma|}\right]$ for some $p\in\Gamma^*$.
\\Now $m(1_C) = 1$, but $m(r_x 1_C) = m(1_D) = 0$, hence $m\in \TLIM(G) \setminus \TIM(G)$.
\end{proof}
\chapter{Topologically Invariant means on \texorpdfstring{$VN(G)$}{VN(G)}}
\section{Background on Fourier Algebra}
The algebras $\AG$ and $\VN$ are defined and studied in the influential paper \cite{Eymard}.
Chapter 2 of \cite{Lau-Kaniuth} gives a modern treatment of the same material in English.

\begin{point}
Let $\lambda$ denote the left-regular representation.
By definition, $\VN = \lambda[G]'' = \lambda[\Lone]''$.
$\Pcal_1(\Ghat)$ denotes the set of normal states on $\VN$, and the linear span of $\Pcal_1(\Ghat)$ is denoted by $\AG$.
It turns out that $\Pcal_1(\Ghat)$ consists of all vector states of the form $\omega_f(T) = \langle Tf,f\rangle$ where $\|f\|_2 = 1$.
By the polarization identity, $\AG$ consists of all functionals of the form $\omega_{f,g}(T) = \langle Tf,g\rangle$, with $f,g\in\Ltwo$.
\end{point}

\begin{point}
$\AG$ can also be characterized as the set of all functions $u_{f,g}(x) = \langle \lambda(x) f, g\rangle = \overline{g} * \check{f}(x)$ with $f,g\in\Ltwo$.
As such, the elements of $\AG$ are continuous and vanish at infinity.
The duality with $\VN$ is given by $\langle T, u_{f,g}\rangle = \langle Tf,g\rangle$.
In particular, $\langle \lambda(h), u\rangle = \int h(x) u(x)\dd{x}$ for all $h\in\Lone$.

$\AG$ is a commutative Banach algebra with pointwise operations and norm $\|u\| = \sup_{\|T\| = 1} |\langle T, u\rangle|$.
Equivalently, $\|u\|$ is the sup of $\left|\int h(x) u(x)\dd{x}\right|$ over all $h\in\Lone$ such that $\|\lambda(h)\|\leq 1$.
Since $\|\lambda(h)\|\leq \|h\|_1$, we conclude $\|u\| \geq \|u\|_\infty$.
\end{point}

\begin{point}
If $u\in\AG$ is positive as a functional on $\VN$, we write $u\geq 0$.
It's easy to see $u\geq 0$
iff $u = u_{f,f}$ for some $f\in\Ltwo$
iff $\|u\|_\infty = u(e) = \langle I, u\rangle = \|u\|$.
In particular, $\Pcal_1(\Ghat) = \{u\in\AG : \|u\| = u(e) = 1\}$ is closed under multiplication.
Suppose $\|f\|_2 = \|g\|_2 = 1$.
Then $\|u_{f,f} - u_{g,g}\|
	= \sup_{\|T\|=1} |\langle Tf, f\rangle - \langle Tg, g\rangle|
	\leq \sup_{\|T\|=1} |\langle T(f-g), f\rangle| + |\langle Tg, (f-g)\rangle| \leq 2\|f-g\|_2$.
\end{point}
\section{TI-nets in \texorpdfstring{$\Pcal_1(\widehat{G})$}{P1(G\^{})}}
\begin{point}
For $T\in\VN$ and $u,v\in\AG$, define $uT$ by $\langle uT, v\rangle = \langle T, uv\rangle$.
A mean $m$ is said to be topologically invariant if
$\langle m, uT\rangle  = \langle m, T\rangle$ for all $u\in\Pcal_1(\Ghat)$ and $T\in\VN$.
$\TIM(\Ghat)$ denotes the set of topologically invariant means on $\VN$.
There is no distinction between left/right topologically invariant means, since multiplication in $\Pcal_1(\Ghat)$ is commutative.
\end{point}

\begin{point}
A net $\{u_\gamma\}\subset\Pcal_1(\Ghat)$ is called a weak TI-net if
$\lim_\gamma \langle vu_\gamma - u_\gamma, T\rangle = 0$ for all $v\in\Pcal_1(\Ghat)$ and $T\in\VN$.
By Lemma~\ref{Lemma P is Dense in M}, topologically invariant means are precisely the limit points of weak TI-nets.
$\{u_\gamma\}$ is simply called a TI-net if $\lim_\gamma \|vu_\gamma - u_\gamma\| = 0$ for all $v\in\Pcal_1(\Ghat)$.
\end{point}

\begin{Lemma}\label{Lemma Small Supp Means TI-net}
Pick any net $\{u_\gamma\}\subset \Pcal_1(\Ghat)$.
Suppose that $\supp(u_\gamma)$ is eventually small enough to fit in any $V\in \Cscr(G)$.
Then $\{u_\gamma\}$ is a TI-net.
\end{Lemma}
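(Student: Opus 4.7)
The plan is to produce an explicit realization of $vu_\gamma$ and $u_\gamma$ as matrix coefficients of the left regular representation on a common Hilbert space, and then quantify how close the defining vectors become as $\mathrm{supp}(u_\gamma)$ shrinks. First, fix $v\in\Pcal_1(\widehat{G})$, write $v = u_{f,f}$ with $\|f\|_2 = 1$, and write $u_\gamma = u_{g_\gamma,g_\gamma}$ with $\|g_\gamma\|_2 = 1$. Since the map $x\mapsto \lambda(x)f$ is norm-continuous on $L_2(G)$ and $\mathrm{supp}(u_\gamma)\supseteq\mathrm{supp}(g_\gamma)\cdot\mathrm{supp}(g_\gamma)^{-1}$, we may translate $g_\gamma$ (which leaves $u_\gamma$ untouched) so that $\mathrm{supp}(g_\gamma)$ also shrinks into any given $V\in\Cscr(G)$.

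Next, I would exploit the formula $vu_\gamma(x) = \langle (\lambda(x)\otimes\lambda(x))(f\otimes g_\gamma),\, f\otimes g_\gamma\rangle_{L_2(G\times G)}$ together with the intertwiner $U\colon L_2(G\times G)\to L_2(G\times G)$, $UF(s,t) = F(s,st)$, which satisfies $U(\lambda(x)\otimes\lambda(x))U^* = \lambda(x)\otimes I$. Setting $F_\gamma := U(f\otimes g_\gamma)$, so that $F_\gamma(s,t) = f(s)g_\gamma(st)$ and $\|F_\gamma\|_2 = 1$, gives
\[
 vu_\gamma(x) = \langle(\lambda(x)\otimes I)F_\gamma,\, F_\gamma\rangle.
\]
Similarly, $u_\gamma(x) = \langle(\lambda(x)\otimes I)F_\gamma',\, F_\gamma'\rangle$ for a suitable $F_\gamma'\in L_2(G\times G)$ of norm $1$ realizing $u_\gamma$. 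The standard matrix-coefficient inequality then yields
\[
 \|vu_\gamma - u_\gamma\|_{A(G)} \leq 2\,\|F_\gamma - F_\gamma'\|_{L_2(G\times G)},
\]
so everything reduces to producing $F_\gamma'$ with $\|F_\gamma - F_\gamma'\|_2 \to 0$.

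To build $F_\gamma'$ I would first use that compactly supported continuous functions are dense in $L_2(G)$ and that $\|u_{f,f} - u_{f',f'}\|_{A(G)} \leq 2\|f-f'\|_2$, to replace $f$ by a continuous $f'$ of compact support at a controlled cost in the triangle estimate; the reduction shows it suffices to prove the lemma with $f$ continuous. Then, exploiting that $\mathrm{supp}(g_\gamma)$ is eventually contained in any neighborhood of $e$, the approximations $f(s)\approx f(e) = 1$ and $g_\gamma(st)\approx g_\gamma(t)$ (the latter by joint continuity of translation in $L_2$ applied to $g_\gamma$) can be turned into quantitative bounds on $\|F_\gamma - F_\gamma'\|_2$ for a natural candidate built from $g_\gamma$ and a cutoff of $f$ near $e$.

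The main obstacle, and the step requiring the most care, is precisely the construction and estimation in the previous paragraph: passing from the easy pointwise bound $|v(x)-1|<\varepsilon$ on $\mathrm{supp}(u_\gamma)$ (immediate from $|v(x)-1| \leq \|\lambda(x)f - f\|_2$ and equicontinuity) to an $L_2$-closeness of two vectors in $L_2(G\times G)$, and hence to an $A(G)$-norm estimate. The naive choices $F_\gamma' = g_\gamma\otimes e_0$ (with $e_0$ independent of the fiber variable) do \emph{not} give $\|F_\gamma-F_\gamma'\|_2\to 0$, because $F_\gamma(s,t) = f(s)g_\gamma(st)$ couples the two variables through the group operation in a way that cannot be decoupled without using the smallness of $\mathrm{supp}(g_\gamma)$. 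The right choice must couple the two variables by the translation structure, i.e.\ an $s$-dependent vector in the second factor, and the verification that $\|F_\gamma - F_\gamma'\|_2\to 0$ then rests on the uniform continuity of $s\mapsto\lambda(s)g_\gamma$ on the shrinking support.
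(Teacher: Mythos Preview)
Your approach has a genuine gap at the very first step. The containment you write is backwards: from $u_\gamma(x)=\int g_\gamma(x^{-1}y)\overline{g_\gamma(y)}\,dy$ one gets
\[
\supp(u_\gamma)\ \subseteq\ \supp(g_\gamma)\cdot\supp(g_\gamma)^{-1},
\]
not $\supseteq$. So knowing $\supp(u_\gamma)$ is small says nothing about the size of $\supp(g_\gamma)$. Translating $g_\gamma$ does not help either: left translation by $a$ replaces $u_\gamma(x)$ by $u_\gamma(a^{-1}xa)$ (so it does not leave $u_\gamma$ untouched for nonabelian $G$), while right translation moves $\supp(g_\gamma)$ rigidly and cannot shrink it. Thus the whole plan of exploiting smallness of $\supp(g_\gamma)$ is unsupported, and the subsequent ``uniform continuity of $s\mapsto\lambda(s)g_\gamma$ on the shrinking support'' is being applied to the wrong variable: in $F_\gamma(s,t)=f(s)g_\gamma(st)$ the variable $s$ ranges over $\supp(f)$, which does not shrink. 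In addition, you never actually produce an $F_\gamma'$ with both $\langle(\lambda(x)\otimes I)F_\gamma',F_\gamma'\rangle=u_\gamma(x)$ and $\|F_\gamma-F_\gamma'\|_2\to 0$; the natural candidates (e.g.\ $g_\gamma\otimes\xi$ or $f\otimes g_\gamma$) fail one of the two requirements.

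The paper's argument avoids all of this Hilbert-space bookkeeping by working purely inside $A(G)$. It uses two structural facts: regularity of $A(G)$ (to find $v\in A(G)$ with $v\equiv 1$ on a fixed compact set containing $\supp(u_\gamma)$, so $vu_\gamma=u_\gamma$), and the fact that $\{e\}$ is a set of local synthesis (any $w\in A(G)$ with $w(e)=0$ can be approximated in $A(G)$-norm by functions vanishing on a neighbourhood of $e$). Applying the latter to $u-v$ kills the product with $u_\gamma$ once $\supp(u_\gamma)$ is small enough, and a triangle inequality finishes the estimate. If you want to repair your route, you would need an independent argument that some representative $g_\gamma$ can be chosen with small support, and then a concrete $F_\gamma'$; absent that, the regularity/synthesis argument is both shorter and complete.
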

\begin{proof}
This is \cite[Proposition 3]{renaud}.
I'll repeat the proof for convenience.
\\Pick $u\in\Pcal_1(\Ghat)$.
Since compactly supported functions are dense in $\AG$, it suffices to consider the case when $u$ has compact support $C$.
$\AG$ is regular by \cite[Proposition 2.3.2]{Lau-Kaniuth}, so we can pick $v\in\AG$ with $v\equiv 1$ on $C$.
Since $u-v(e) = 0$, by \cite[Lemma 2.3.7]{Lau-Kaniuth} we can find some $w\in\AG$ with $\|(u-v) - w\| < \epsilon$ and $w\equiv 0$ on some neighborhood $W$ of $e$.
Suppose $\gamma$ is large enough that $\supp(u_\gamma) \subset W\cap C$.
Then $v u_\gamma = u_\gamma$, and $wu_\gamma = 0$,
hence $\|u u_\gamma - u_\gamma\|
	= \|(u-v) u_\gamma\|
	\leq \|(u-v-w)u_\gamma\| + \|w u_\gamma\|
	\leq \|(u-v-w)\| < \epsilon$.
\end{proof}

\begin{point}
Let $\Cscr(G)$ denote the compact, symmetric neighborhoods of $e$.
As a consequence of Lemma~\ref{Lemma Small Supp Means TI-net}, we see that $\Pcal_1(\Ghat)$ has TI-nets for any $G$, in contrast to $\Pcal_1(G)$ which has TI-nets only when $G$ is amenable:
If $\{U_\gamma\}\subset \Cscr(G)$ is a neighborhood basis at $e$, directed by inclusion, let $u_\gamma = (1_{U_\gamma} * 1_{U_\gamma}) / |U_\gamma|$.
Since $\supp(u_\gamma) \subset U_\gamma^2$, and $\{U_\gamma^2\}$ is a neighborhood basis at $e$, $\{u_\gamma\}$ is a TI-net.

Lemma~\ref{Lemma Small Supp Means TI-net} has a sort of converse, given by the following lemma.
\end{point}

\begin{Lemma}\label{Lemma Every TIM(Ghat) is limit of TI-net}
Every $m\in \TIM(\Ghat)$ is the limit of a net $\{u_\gamma\}$
such that $\supp(u_\gamma)$ is eventually small enough to fit in any $V\in\Cscr(G)$.
\end{Lemma}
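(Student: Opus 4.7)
The plan is to mirror the proof of Corollary~\ref{Cor TI-nets suffices}, exploiting two features of $\Pcal_1(\Ghat)$: it is closed under pointwise multiplication, and if $v, g \in \Pcal_1(\Ghat)$ then $\supp(vg) \subset \supp(v) \cap \supp(g)$. These features replace the convolution-based support control that was available on the $L_\infty$ side.

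It suffices to show that for every finite collection $T_1, \ldots, T_n \in \VN$, every $\epsilon > 0$, and every $V \in \Cscr(G)$, there exists $u \in \Pcal_1(\Ghat)$ with $\supp(u) \subset V$ and $\max_i |\langle m - u, T_i\rangle| < \epsilon$. Given such density, we index a net by pairs $(U, V)$ where $U$ is a basic $w^*$-neighborhood of $m$ and $V \in \Cscr(G)$, ordered by reverse inclusion in both coordinates, and pick $u_{(U,V)} \in U$ with $\supp(u_{(U,V)}) \subset V$. This net converges to $m$ and eventually has support in any prescribed $V$.

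To produce $u$, first pick $v \in \Pcal_1(\Ghat)$ with $\supp(v) \subset V$; one may take $v = (1_W * 1_W)/|W|$ where $W \in \Cscr(G)$ satisfies $W^2 \subset V$, as in the paragraph preceding Lemma~\ref{Lemma Every TIM(Ghat) is limit of TI-net}. Then apply Lemma~\ref{Lemma P is Dense in M} to find $g \in \Pcal_1(\Ghat)$ with $|\langle m - g, vT_i\rangle| < \epsilon$ for each $i = 1, \ldots, n$. Set $u = vg$; by closure of $\Pcal_1(\Ghat)$ under multiplication this is again in $\Pcal_1(\Ghat)$, and $\supp(u) \subset \supp(v) \subset V$.

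Finally, verify the estimate. Since $m$ is topologically invariant, $\langle m, T_i\rangle = \langle m, vT_i\rangle$. By the definition of $vT_i$, $\langle u, T_i\rangle = \langle vg, T_i\rangle = \langle g, vT_i\rangle$. Subtracting gives $\langle m - u, T_i\rangle = \langle m - g, vT_i\rangle$, whose magnitude is less than $\epsilon$ by the choice of $g$. The only subtle point is the recognition that the ``multiplier'' $v$ plays the role simultaneously of a topologically invariant test function (so that $m$ absorbs it) and of a support-shrinking factor (so that $u$ lands in $\Ncal_V$); once one sees this dual role, the calculation is immediate and mimics Corollary~\ref{Cor TI-nets suffices} almost verbatim.
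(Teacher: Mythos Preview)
Your proof is correct and follows essentially the same approach as the paper's: both fix a small-support state in $\Pcal_1(\Ghat)$, approximate $m$ in $\Pcal_1(\Ghat)$ at the ``multiplied'' operators, and use multiplicative closure together with topological invariance to pass the approximation back to the original operators. Your version is slightly cleaner in that it is direct rather than by contradiction and treats finitely many $T_i$ explicitly, but the key idea is identical.
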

\begin{proof}
Suppose to the contrary that there exist $V\in\Cscr(G)$, $T\in\VN$, and $\epsilon > 0$ such that $|\langle m-u, T\rangle| > \epsilon$ for all $u\in\Pcal_1(\Ghat)$ with $\supp(u) \subset V$.
Fix any $u\in\Pcal_1(\Ghat)$ with $\supp(u) \subset V$.
By Lemma~\ref{Lemma P is Dense in M}, pick $v\in\Pcal_1(\Ghat)$ with $|\langle m - v, T\rangle | < \epsilon$,
and $|\langle m - v, uT\rangle| < \epsilon$.
Now $vu\in \Pcal(\Ghat)$ with $\supp(vu)\subset V$.
But $|\langle m - vu, T\rangle|
	=|\langle m, T\rangle - \langle vu, T\rangle|
	= |\langle m, uT\rangle - \langle v, uT\rangle|
	= |\langle m - v, uT\rangle| < \epsilon$, a contradiction.
\end{proof}

\begin{Corollary}
Every $m\in\TIM(\Ghat)$ is the limit of a TI-net.
\end{Corollary}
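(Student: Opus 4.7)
The plan is to observe that this corollary is essentially just the composition of the two preceding lemmas, so very little new work is needed. Given $m\in\TIM(\Ghat)$, I would first apply Lemma~\ref{Lemma Every TIM(Ghat) is limit of TI-net} to produce a net $\{u_\gamma\}\subset\Pcal_1(\Ghat)$ with $u_\gamma\to m$ in the $w^*$-topology and with the property that for every $V\in\Cscr(G)$, eventually $\supp(u_\gamma)\subset V$. Then I would invoke Lemma~\ref{Lemma Small Supp Means TI-net}, which takes exactly such a net as input and concludes that it is a TI-net.

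Concretely, after extracting the net from Lemma~\ref{Lemma Every TIM(Ghat) is limit of TI-net}, I would check the definition of TI-net: for each $u\in\Pcal_1(\Ghat)$, one must verify $\|u u_\gamma - u_\gamma\|\to 0$. But this is precisely the content of Lemma~\ref{Lemma Small Supp Means TI-net}, since the hypothesis on shrinking supports is satisfied. The $w^*$-convergence $u_\gamma\to m$ is preserved, so $m$ is the limit of this TI-net.

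I do not expect any substantive obstacle here, because the heavy lifting has already been done. The nontrivial content is in Lemma~\ref{Lemma Every TIM(Ghat) is limit of TI-net} (which uses the Hahn-Banach-style argument together with the fact that $uT$ is defined via $\langle uT,v\rangle = \langle T,uv\rangle$ and that $\Pcal_1(\Ghat)$ is closed under multiplication) and in Lemma~\ref{Lemma Small Supp Means TI-net} (which relies on regularity of $\AG$ and Lau-Kaniuth's approximation lemma for functions vanishing at a point). All that remains is to string these together and note that the net produced in the first lemma is precisely of the type required by the second.
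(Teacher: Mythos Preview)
Your proposal is correct and matches the paper's approach exactly: the paper states this corollary with no proof, because it follows immediately by composing Lemma~\ref{Lemma Every TIM(Ghat) is limit of TI-net} with Lemma~\ref{Lemma Small Supp Means TI-net}, precisely as you describe.
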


\begin{Corollary}\label{Corollary Upper Bound on TIM Ghat}
$|\TIM(\Ghat)| \leq 2^{2^\mu}$
\end{Corollary}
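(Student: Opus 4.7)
The plan is to realize $\TIM(\Ghat)$ as a subset of a $w^*$-closure whose cardinality can be bounded by the standard inequality $|X|\leq 2^{2^{d(X)}}$ for Hausdorff spaces (Posp\'i\v{s}il). Fix any $V\in\Cscr(G)$. By Lemma~\ref{Lemma Every TIM(Ghat) is limit of TI-net}, every $m\in\TIM(\Ghat)$ lies in $\cl_{w^*}(S_V)$, where $S_V=\{u\in\Pcal_1(\Ghat):\supp(u)\subset V\}$; hence if $d(\cl_{w^*}(S_V))\leq\mu$, we obtain $|\TIM(\Ghat)|\leq 2^{2^\mu}$ as desired. Since the embedding $\AG=\VN_*\hookrightarrow\VN^*$ is isometric, $\AG$-norm convergence implies $w^*$-convergence, so it suffices to exhibit an $\AG$-norm-dense subset $D\subset S_V$ with $|D|\leq\mu$.

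To do this I would first confirm that the compact set $V$ has weight $w(V)\leq\mu$. Starting from a neighborhood basis $\{U_\alpha\}_{\alpha<\mu}$ at $e$ refined so that $U_{\alpha+1}^2\subset U_\alpha$, compactness yields for each $\alpha$ a finite $F_\alpha\subset V$ with $\{U_\alpha v:v\in F_\alpha\}$ covering $V$; one then checks directly that $\{U_\alpha v:\alpha<\mu,\ v\in F_\alpha\}$ is a basis for the topology of $V$ of cardinality $\mu$. Next, pick $W\in\Cscr(G)$ with $W^2\subset V$; since $w(W)\leq\mu$, the Hilbert space $L^2(W)$ has density $\leq\mu$, so choose $F\subset L^2(W)$ norm-dense with $|F|\leq\mu$. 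The Lipschitz bound $\|u_{f,f}-u_{g,g}\|\leq 2\|f-g\|_2$ from the background shows that $D=\{u_{f,f}:f\in F,\ \|f\|_2=1\}$ is $\AG$-norm-dense in $\{u_{g,g}:g\in L^2(W),\ \|g\|_2=1\}\subset S_V$, with $|D|\leq\mu$.

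The main obstacle is verifying that $D$ is genuinely dense enough, i.e., that $\cl_{w^*}(D)\supset\TIM(\Ghat)$, even though a generic element $u_{g,g}\in S_V$ has $g\in\Ltwo$ whose support can be strictly larger than $V$. I would tackle this by revisiting the argument of Lemma~\ref{Lemma Every TIM(Ghat) is limit of TI-net}: given $m\in\TIM(\Ghat)$, test operators $T_1,\dots,T_n\in\VN$, and $\epsilon>0$, combining an arbitrary $u_{f,f}\in D$ (playing the role of the small-support ``$u$'') with a $w^*$-approximator $v\in\Pcal_1(\Ghat)$ of $m$ against each $T_i$ and each $u_{f,f}T_i$ produces $v\cdot u_{f,f}\in S_V$ approximating $m$. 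The delicate part is then showing $v\cdot u_{f,f}$ itself is $\AG$-norm close to an element of $D$. This would use either a cyclicity argument for $L^2(W)$ in $\Ltwo$ under the action of $\VN$, or a refined use of the Tauberian property of $\AG$ (Lau--Kaniuth~2.3.7) to replace $v$ by a compactly supported $\AG$-element whose product with $u_{f,f}$ is representable as $u_{h,h}$ for some $h\in L^2(W)$. I expect this last replacement to be the most technically demanding step of the proof.
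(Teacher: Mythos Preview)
Your plan coincides with the paper's proof almost line for line.  The paper also fixes $V\in\Cscr(G)$, invokes Lemma~\ref{Lemma Every TIM(Ghat) is limit of TI-net} to get $\TIM(\Ghat)\subset\cl(X)$ with $X=\{u\in\Pcal_1(\Ghat):\supp(u)\subset V\}$, sets $B=\{f\in\Ltwo:\|f\|_2=1,\ \supp(f)\subset V\}$, chooses a size-$\mu$ $L_2$-dense subset $C'$ of the continuous elements of $B$ (this is the ``$w(V)\le\mu$'' computation you sketch), and takes $D=\{u_{f,f}:f\in C'\}$, relying on exactly the Lipschitz bound $\|u_{f,f}-u_{g,g}\|\le 2\|f-g\|_2$ you cite.

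The difference is that the paper simply asserts ``$D$ is the desired set'' and stops, whereas you correctly flag that the Lipschitz estimate only makes $D$ dense in $\{u_{f,f}:f\in B\}$, not obviously in all of $X$, since a state $u_{g,g}$ with $\supp(u_{g,g})\subset V$ can have $\supp(g)$ unbounded.  So the obstacle you isolate is genuine and is not addressed in the paper's proof.  Your proposed repairs via cyclicity of $L^2(W)$ or the Tauberian lemma look heavier than needed, and it is not clear either route actually produces $h\in L^2(W)$ with $u_{h,h}$ norm-close to $v\cdot u_{f,f}$.  A lighter fix, in the spirit of Lemma~\ref{L-UpperBoundSizeOfTlim}, is to replace $D$ by $\conv_{\Qbb}(D)$ (still size $\mu$): the unitary on $L^2(G\times G)$ sending $(s,t)\mapsto(s,s^{-1}t)$ intertwines $\lambda\otimes\lambda$ with $\lambda\otimes 1$, so $v\cdot u_{f,f}=\int \|h_r\|_2^2\, u_{h_r/\|h_r\|_2,\,h_r/\|h_r\|_2}\,dr$ with $h_r(s)=g(s)f(sr)$ supported in a right-translate of $\supp(f)$; since right-translating the vector leaves the normalized state $u_{h,h}/\|h\|_2^2$ unchanged, each integrand already lies in $\{u_{f',f'}:f'\in B\}$, and $v\cdot u_{f,f}\in\cl(\conv D)$ follows.
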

\begin{proof}
Recall that $\mu$ is the minimal cardinality of a neighborhood basis at $e$.
Pick any $V\in\Cscr(G)$. By the previous lemma, $\TIM(\Ghat)$ is in the closure of $X = \{u\in \Pcal_1(\Ghat) : \supp(u) \subset V\}$.
Therefore, it suffices to find a set $D$ with $|D| = \mu$ and $X \subset \cl(D)$.

Let $B = \{f\in\Ltwo : \|f\|_2 = 1,\ \supp(f) \subset V\}$, and let $C$ comprise the continuous functions in $B$.
Obviously $C$ is dense in $B$, and it is routine to show that $C$ has a dense subset $C'$ of cardinality $\mu$.
Now $D = \{u_{f,f} : f\in C'\}$ is the desired set.
\end{proof}

\begin{point}\label{rank one support}
Let $u_{f,f}$ be a normal state on $\VN$, and $P$ the projection onto the linear span of $f$.
Since $P$ has rank one, it is clearly the inf of all projections $Q$ such that $1 = \langle Q f, f\rangle = \langle Q, u_{f,f}\rangle$.
In the terminology of \ref{Definition of Support}, $P$ is the support of $u_{f,f}$.
Thus a family $\{u_{f_\alpha, f_\alpha}\} \subset \AG$ is orthogonal iff $\{f_\alpha\} \subset \Ltwo$ is orthogonal.

Apparently neither Chou nor Hu made this observation.
If it seems obvious, it is because we took care to express normal states as $u_{f,f}$.
For example, in order to construct an orthogonal TI-sequence, \cite[pages 210-213]{Chou82} takes three pages to describe a generalized Gram-Schmidt procedure for normal states on a von Neumann algebra.
This procedure does not generalize beyond sequences, and \cite{Hu95} is unable to construct an orthogonal net when $\mu > \Nbb$.
\end{point}

\begin{Lemma}\label{Lemma_Orthogonal_TI_Sequence}
When $\mu = \Nbb$, $\Pcal_1(\Ghat)$ contains an orthogonal TI-sequence.
\end{Lemma}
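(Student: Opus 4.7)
The plan is to realize the sequence as $u_n = u_{f_n,f_n}$ for carefully chosen $f_n \in \Ltwo$. By \ref{rank one support}, orthogonality of $\{u_n\}$ in $\Pcal_1(\Ghat)$ reduces to orthogonality of $\{f_n\}$ in $\Ltwo$. By Lemma~\ref{Lemma Small Supp Means TI-net}, the TI-sequence property reduces to $\supp(u_n) \subset \supp(f_n)\supp(f_n)^{-1}$ being eventually contained in every $V \in \Cscr(G)$. I will satisfy both conditions simultaneously by taking the $f_n$ to be normalized indicator functions of disjoint ``shells'' around $e$ whose symmetric products shrink to $\{e\}$.

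Since $\mu(G) = \Nbb$, first fix a nested countable basis $V_1 \supset V_2 \supset \cdots$ of open symmetric relatively compact neighborhoods of $e$. Using continuity of multiplication at $(e,e)$ and iterated intersection, produce compact symmetric neighborhoods $U_n$ of $e$ with $U_n \supset U_{n+1}$ and $U_n U_n \subset V_n$; then $\bigcap_n U_n = \{e\}$. Because $\mu(G) = \Nbb \neq 1$, $G$ is non-discrete, so Haar measure is non-atomic, and continuity from above applied to the finite measure of $U_1$ yields $|U_n| \to 0$.

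Pass to a subsequence with $|U_{n+1}| < \tfrac12 |U_n|$, which is still a basis, and set $W_n := U_n \setminus U_{n+1}$. The shells are pairwise disjoint with $|W_n| > |U_n|/2 > 0$, and since $U_n$ is symmetric, $W_n W_n^{-1} \subset U_n^2 \subset V_n$. Letting $f_n := 1_{W_n}/\sqrt{|W_n|}$, a direct computation gives $u_n(x) = |W_n \cap xW_n|/|W_n|$, whose support is contained in $W_n W_n^{-1} \subset V_n$ and hence eventually lies inside every $V \in \Cscr(G)$. The $f_n$ are orthonormal by disjointness of supports, so $\{u_n\}$ is orthogonal by \ref{rank one support}, and Lemma~\ref{Lemma Small Supp Means TI-net} then confirms the TI-sequence property.

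The only real obstacle is securing $|W_n| > 0$; this is exactly where non-discreteness of $G$ is used, through non-atomicity of Haar measure, the resulting $|U_n| \to 0$, and the halving-subsequence trick that converts that shrinkage into positive-measure shells. Everything else is formal.
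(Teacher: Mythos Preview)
Your argument is correct and follows essentially the same route as the paper: both take a descending countable basis $\{U_n\}\subset\Cscr(G)$, form the shells $W_n=U_n\setminus U_{n+1}$, set $f_n=1_{W_n}/|W_n|^{1/2}$, and invoke \ref{rank one support} for orthogonality and Lemma~\ref{Lemma Small Supp Means TI-net} for the TI-property. The only difference is that you spell out, via non-atomicity and the halving-subsequence trick, why one can arrange $|W_n|>0$, whereas the paper simply stipulates this in choosing $\{U_n\}$.
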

\begin{proof}
Let $\{U_n\} \subset \Cscr(G)$ be a descending neighborhood basis at $e$, such that $|U_n \setminus U_{n+1}| > 0$.
Let $V_n = U_n \setminus U_{n+1}$, and $f_n = 1_{V_n} / |V_n|^{1/2}$.
In light of Lemma~\ref{Lemma Small Supp Means TI-net} and \ref{rank one support}, $\{u_{f_n, f_n}\}$ is the desired sequence.
\end{proof}

\begin{Lemma}\label{Lemma_Minimal_Cardinality_Subbasis}
Let $\Ucal \subset \Cscr(G)$. If $\bigcap \Ucal = \{e\}$, then $\Ucal$ is a neighborhood sub-basis at $e$ (hence $|\Ucal| \geq \mu$).
\end{Lemma}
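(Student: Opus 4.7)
The plan is to exploit the compactness of the members of $\Ucal$ together with the condition $\bigcap\Ucal = \{e\}$ in a standard finite-subcovering argument. Concretely, fix any open neighborhood $V$ of $e$, and pick any $U_0 \in \Ucal$. Put $K = U_0 \setminus V$; this is a closed subset of the compact set $U_0$, hence itself compact. For each $x \in K$ we have $x \neq e$, so by $\bigcap\Ucal = \{e\}$ there is some $U_x \in \Ucal$ with $x \notin U_x$. Because elements of $\Cscr(G)$ are compact subsets of a Hausdorff space, they are closed, so each $U_x^c$ is open.

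The family $\{U^c : U \in \Ucal\}$ therefore supplies an open cover of $K$. By compactness, finitely many $U_1, \ldots, U_n \in \Ucal$ suffice, so $K \subset U_1^c \cup \cdots \cup U_n^c$, which is the same as $U_0 \cap U_1 \cap \cdots \cap U_n \cap V^c = \varnothing$. Hence $U_0 \cap U_1 \cap \cdots \cap U_n \subset V$, exhibiting a finite intersection of elements of $\Ucal$ inside $V$. This is exactly the statement that $\Ucal$ is a neighborhood sub-basis at $e$.

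For the cardinality inequality, split on whether $G$ is discrete. If $G$ is discrete, then $\mu = 1$ and the bound $|\Ucal| \geq \mu$ is vacuous (and $\Ucal$ is nonempty whenever $\bigcap\Ucal = \{e\} \neq G$). Otherwise $\{e\}$ is not open, so $\Ucal$ cannot be finite (a finite intersection of neighborhoods of $e$ is a neighborhood of $e$, but $\{e\}$ is not). For infinite $\Ucal$, the collection of finite-intersection-words in $\Ucal$ has cardinality $|\Ucal|$ and forms, by the previous paragraph, a neighborhood basis at $e$. Since $\mu$ is by definition the least cardinality of such a basis, $\mu \leq |\Ucal|$.

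I expect no real obstacle: the whole argument is one compactness step plus the elementary observation that compact sets in a Hausdorff space are closed. The only subtlety is checking the ``sub-basis vs.\ basis'' bookkeeping and the degenerate discrete case, which I have isolated in the last paragraph.
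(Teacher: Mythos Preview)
Your proof is correct and follows essentially the same approach as the paper: both reduce to a single compactness step showing that the family $\{U\cap V^c : U\in\Ucal\}$ has empty intersection and hence a finite subfamily with empty intersection. The paper invokes the finite intersection property directly, while you phrase it dually via an open cover of $K = U_0\setminus V$; these are the same argument in different clothing. Your explicit treatment of the cardinality bound $|\Ucal|\geq\mu$ (including the discrete edge case) is a welcome addition, since the paper leaves that parenthetical claim to the reader.
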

\begin{proof}
Suppose $\bigcap \Ucal = \{e\}$, and let $V$ be any open neighborhood of $e$.
Since $\varnothing = \{e\}\cap V^C = \bigcap_{U\in \Ucal} [U\cap V^C]$ is an intersection of compact sets, it follows that some finite sub-intersection is also empty.
In other words, $U_1\cap \hdots \cap U_n \subset V$ for some $U_1, \hdots, U_n \in \Ucal$.
\end{proof}

\begin{Lemma}[Kakutani-Kodaira]\label{Lemma KK}
Let $\Ucal \subset \Cscr(G)$.
Suppose each $U \in \Ucal$ has a ``successor'' $V\in \Ucal$, such that $V^2 \subset U$.
Then $H = \bigcap \Ucal$ is a compact subgroup of $G$.
\end{Lemma}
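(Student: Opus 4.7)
My plan is to check the three defining properties of a subgroup — containing $e$, closed under inverses, closed under products — and observe that compactness is free. Compactness is immediate: $H$ is an intersection of compact sets in a Hausdorff space, and any single $U \in \Ucal$ contains $H$ as a closed subset, so $H$ is compact. Containment of the identity is trivial since every $U \in \Ucal$ is a neighborhood of $e$.

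For closure under inversion I will invoke the fact that the paper defines $\Cscr(G)$ as the compact \emph{symmetric} neighborhoods of $e$. Thus $U^{-1} = U$ for each $U \in \Ucal$, and if $h \in H$ then $h \in U$ for every $U$, giving $h^{-1} \in U^{-1} = U$ for every $U$, i.e.\ $h^{-1} \in H$.

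The one nontrivial step — and the one the successor hypothesis is manifestly tailored for — is closure under multiplication. Given $h, k \in H$ and an arbitrary $U \in \Ucal$, I pick a successor $V \in \Ucal$ with $V^2 \subset U$. Both $h$ and $k$ lie in $V$, so $hk \in V^2 \subset U$. Since $U \in \Ucal$ was arbitrary, $hk \in \bigcap \Ucal = H$. I do not expect any real obstacle here; the hypothesis essentially encodes the Kakutani-Kodaira trick of repeatedly choosing smaller neighborhoods whose squares fit inside the previous one, so the whole argument reduces to a single line once the pieces are named.
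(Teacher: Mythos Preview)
Your proof is correct and matches the paper's argument essentially line for line: compactness from the intersection of compact sets in a Hausdorff space, closure under products via the successor $V$ with $V^2 \subset U$, and closure under inversion from the symmetry of each $U\in\Cscr(G)$. The only difference is that you explicitly note $e\in H$, which the paper leaves implicit.
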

\begin{proof}
$H$ is compact, because it is the intersection of compact sets in a Hausdorff space.
\\Suppose $x,y \in H$. Suppose $U, V \in \Ucal$ with $V^2 \subset U$.
Since $x,y \in H\subset V$, $xy \in V^2 \subset U$.
\\Since $U$ was arbitrary, we conclude $xy\in H$.
Likewise, $x^{-1}\in H$ because each $U$ is symmetric.
\end{proof}

\begin{point}
Suppose $H$ is a compact subgroup of $G$, with normalized Haar measure $\nu$.
Let $L^2(H \backslash G)$ be the set of functions in $L^2(G)$ that are constant on right-cosets of $H$.
Define $Pf(x) = \int f(hx) \dd\nu(h)$.
Now it is routine to check that $P$ is the orthogonal projection onto $L^2(H \backslash G)$.
The only interesting detail, needed to prove $P = P^*$, is that $\int f(hx) \dd\nu(h) = \int f(h^{-1}x) \dd\nu(h)$ because $H$ is unimodular.
\end{point}

\begin{point}\label{KK Small U and H}
Suppose $\{H_\gamma\} = \{H_\gamma\}_{\gamma < \mu}$ is a descending chain of compact subgroups.
Let $P_\gamma$ denote the orthogonal projection onto $L^2(H_\gamma \backslash G)$.
Then $\{P_\gamma\}$ is an ascending chain of projections, and $\{P_{\gamma + 1} - P_\gamma\}$ is a chain of mutually orthogonal projections.
Suppose we construct functions $\{f_\gamma\} \subset \Ltwo$ with $f_\gamma \in L^2(H_{\gamma + 1} \backslash G) - L^2(H_\gamma \backslash G)$, so that $(P_{\gamma+1} - P_\gamma) f_\gamma \neq 0$.
Letting $g_\gamma = (P_{\gamma+1} - P_\gamma) f_\gamma$, we see $\{u_\gamma\}= \{u_{g_\gamma, g_\gamma} / \|g_\gamma\|_2^2\}$ is a chain of mutually orthogonal functions in $\Pcal_1(\Ghat)$, since $S(u_\gamma) \leq P_{\gamma+1} - P_\gamma$.

Consider the condition $f_\gamma \in L^2(H_{\gamma+1} \backslash G) - L^2(H_\gamma \backslash G)$.
How can we achieve this?
Let $\nu_\gamma$ denote the Haar measure of $H_\gamma$.
Pick $U_\gamma \in \Cscr(G)$ small enough that $\nu_\gamma(U_\gamma^4 \cap H_\gamma) < 1$.
Construct $H_{\gamma + 1}$ small enough that $H_{\gamma + 1} \subset U_\gamma$, and let $f_\gamma = 1_{H_{\gamma + 1} U_\gamma}$. Obviously $f_\gamma \in L^2(H_{\gamma+1} \backslash G)$.
On the other hand,
$P_\gamma f_\gamma(x) = \nu_\gamma(\{h\in H_\alpha : hx \in H_{\gamma + 1} U_\gamma\}) < 1 = f_\gamma(x)$
for any $x\in H_{\gamma + 1} U_\gamma$.
Hence $P_\gamma f_\gamma \neq f_\gamma$, and $f_\gamma\not\in L^2(H_\gamma \backslash G)$.

We need to add one more detail to our construction, to make the chain of orthogonal functions $\{u_\gamma\}$ into a TI-net.
Namely, suppose $\{V_\gamma\}\subset\Cscr(G)$ is a neighborhood basis at $e$, and $\{U_\gamma, H_\gamma\}$ satisfy $H_\gamma U_\gamma^2 \subset V_\gamma$.
Then $\supp(g_\gamma) \subset H_\gamma H_{\gamma+1} U_\gamma \subset V_\gamma$,
hence $\supp(u_\gamma) \subset V_\gamma^2$.
Let $\Gamma$ denote the set of ordinals less than $\mu$, ordered by $\alpha\prec\gamma \iff V_\gamma \subset V_\alpha$.
By Lemma~\ref{Lemma Small Supp Means TI-net}, $\{u_\gamma\}_{\gamma\in\Gamma}$ is a TI-net.
Notice that each tail of $\Gamma$ has cardinality $|\Gamma| = \mu$, as required by Lemma~\ref{Lemma Cardinality of Gamma*}.
\end{point}

\begin{Lemma}\label{Lemma_Orthogonal_TI_Net}
When $\mu > \Nbb$, $\Pcal_1(\Ghat)$ contains an orthogonal TI-net of cardinality $\mu$.
\end{Lemma}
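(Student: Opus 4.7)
The plan is to realize, by transfinite induction of length $\mu$, the construction sketched in \ref{KK Small U and H}. I begin by fixing a symmetric compact neighborhood basis $\{V_\gamma\}_{\gamma < \mu} \subset \Cscr(G)$ at $e$ of minimum cardinality, and directing $\Gamma = \mu$ by $\alpha \prec \gamma \iff V_\gamma \subset V_\alpha$; by Lemma~\ref{Lemma_Minimal_Cardinality_Subbasis} each $\prec$-tail of $\Gamma$ has cardinality $\mu$, as needed by Lemma~\ref{Lemma Cardinality of Gamma*}.

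The core step is an inductive construction of a strictly descending chain of nontrivial compact subgroups $\{H_\gamma\}_{\gamma<\mu}$ and compact symmetric neighborhoods $\{U_\gamma\}_{\gamma<\mu}$ satisfying (a) $H_\gamma U_\gamma^2 \subset V_\gamma$ and (b) $\nu_\gamma(U_\gamma^4 \cap H_\gamma) < 1$, with the additional invariant that each $H_\gamma$ can be written as $\bigcap \mathcal{F}_\gamma$ for some $\mathcal{F}_\gamma \subset \Cscr(G)$ with $|\mathcal{F}_\gamma| \leq \max(|\gamma|,\Nbb)$. At each stage, I apply Lemma~\ref{Lemma KK} to a countable descending chain $W_0 \supset W_1 \supset \cdots$ with $W_{n+1}^2 \subset W_n$ and $W_0$ chosen small enough to guarantee the resulting $H_\gamma = (\bigcap_n W_n) \cap \bigcap_{\beta<\gamma} H_\beta$ sits strictly inside $\bigcap_{\beta<\gamma} H_\beta$ (giving strict descent) and inside $V_\gamma$; then I pick $U_\gamma \in \Cscr(G)$ so small that $H_\gamma U_\gamma^2 \subset V_\gamma$ and so that $U_\gamma^4$ avoids some fixed $h \in H_\gamma \setminus \{e\}$, which enforces (b) since $U_\gamma^4 \cap H_\gamma$ becomes a proper closed subset of the group $H_\gamma$.

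With the chain in hand, I follow \ref{KK Small U and H} verbatim: let $P_\gamma$ be the orthogonal projection of $\Ltwo$ onto $L^2(H_\gamma \backslash G)$, and set $f_\gamma = 1_{H_{\gamma+1} U_\gamma}$, $g_\gamma = (P_{\gamma+1} - P_\gamma) f_\gamma$, $u_\gamma = u_{g_\gamma,g_\gamma}/\|g_\gamma\|_2^2$. Condition (b) forces $g_\gamma \neq 0$; the projections $\{P_{\gamma+1}-P_\gamma\}$ are mutually orthogonal, so by \ref{rank one support} the $\{u_\gamma\}$ are mutually orthogonal in $\Pcal_1(\Ghat)$; and $\supp(u_\gamma) \subset V_\gamma^2$, so Lemma~\ref{Lemma Small Supp Means TI-net} shows $\{u_\gamma\}_{\gamma \in \Gamma}$ is a TI-net of cardinality $\mu$.

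The principal obstacle is keeping the chain nontrivial for the full $\mu$ stages, especially at limit ordinals $\lambda$ where $H_\lambda = \bigcap_{\gamma < \lambda} H_\gamma$ could in principle collapse to $\{e\}$ (at which point condition (b) would fail and the construction could not continue). This is exactly what the cardinality invariant $|\mathcal{F}_\gamma| \leq \max(|\gamma|,\Nbb) < \mu$ handles: Lemma~\ref{Lemma_Minimal_Cardinality_Subbasis} then guarantees $H_\gamma \neq \{e\}$ throughout, so there is always a non-identity element available to enforce (b) at the next stage. A secondary technical point is that (a) demands $H_\gamma$ eventually fit inside $V_\gamma$; this is why the Kakutani-Kodaira subchain $\{W_n\}$ at stage $\gamma$ is chosen inside $V_\gamma^{1/2} \cap \bigcap_{\beta<\gamma} H_\beta$ rather than merely inside $\bigcap_{\beta<\gamma} H_\beta$.
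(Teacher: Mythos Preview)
Your proposal is correct and follows essentially the same transfinite-induction approach as the paper: fix a basis $\{V_\gamma\}_{\gamma<\mu}$, build $\{H_\gamma,U_\gamma\}$ with the cardinality invariant $|\Fcal_\gamma|<\mu$ so that Lemma~\ref{Lemma_Minimal_Cardinality_Subbasis} keeps each $H_\gamma$ nontrivial, and then invoke \ref{KK Small U and H}. One small point to tighten: your listed invariants (a) and (b) alone do not force $g_\gamma\neq 0$; the computation in \ref{KK Small U and H} also uses $H_{\gamma+1}\subset U_\gamma$ (so that $H_{\gamma+1}U_\gamma x^{-1}\subset U_\gamma^4$), which the paper records as a separate inductive condition and enforces by taking $W_0\subset U_\alpha$ at successor stages $\beta=\alpha+1$---make that explicit rather than hiding it under ``strict descent.''
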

\begin{proof}
Fix $\{V_\gamma\}_{\gamma < \mu} \subset\Cscr(G)$, a well-ordered neighborhood basis at $e$.
Of course this well-ordering has no topological meaning, but it's necessary for transfinite induction.
The purpose of our induction is to select $\{U_\gamma, H_\gamma\}_{\gamma < \mu}$ as in \ref{KK Small U and H}, from which the lemma clearly follows.

For $0 \leq \beta < \mu$, suppose we have picked $\{U_\gamma, H_\gamma\}_{\gamma < \beta}$ such that (1)-(4) hold for all $\gamma < \beta$:
\\(1) $H_\gamma$ is the intersection of $(\gamma+1)\cdot \Nbb$ elements of $\Cscr(G)$, and is a subgroup of each previous $H_\alpha$.
\\(2) If $\gamma=\alpha + 1$ is a successor ordinal, then $H_\gamma \subset U_\alpha$.
\\(3) $H_\gamma U_\gamma^2 \subset V_\gamma$.
\\(4) $\nu_\gamma(H_\gamma \cap U_\gamma^4) < 1$.
\\Let $\{W_n\}\subset \Cscr(G)$ be any chain with $W_0 \subset V_\beta$ and $W_{n+1}^2 \subset W_n$ for all $n$.
If $\beta = \alpha + 1$, we may suppose $W_0 \subset U_\alpha$.
Let $H_\beta = \bigcap (\{W_n\} \cup \{H_\gamma\}_{\gamma < \beta})$.
Notice $H_\beta W_2^2 \subset W_0 \subset V_\beta$.
Since $H_\beta$ is the intersection of $(\beta+1) \cdot \Nbb < \mu$ elements of $\Cscr(G)$,
it follows from Lemma~\ref{Lemma_Minimal_Cardinality_Subbasis} that $H_\beta \neq \{e\}$.
In particular, it is possible to pick $U\in \Cscr(G)$ with $\nu_\beta (H_\beta \cap U^4) < 1$.
Let $U_\beta = U \cap W_2$.
Now $U_\beta, H_\beta$ satisfy (1)-(4).
\end{proof}

\begin{Theorem}
$|\TIM(\Ghat)| = 2^{2^\mu}$.
\end{Theorem}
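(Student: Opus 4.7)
The upper bound $|\TIM(\Ghat)| \leq 2^{2^\mu}$ is Corollary~\ref{Corollary Upper Bound on TIM Ghat}, so only the lower bound requires work, and this follows the same template as the corresponding result for $\Linfty$ at the end of Chapter~2. The plan is to produce an orthogonal TI-net indexed by a suitable directed set, apply Lemma~\ref{Lemma Orthogonal Implies Injection} to get an injection from the ultrafilter space into $\TIM(\Ghat)$, and then invoke Lemma~\ref{Lemma Cardinality of Gamma*} to count.

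First split on whether $\mu = \Nbb$ or $\mu > \Nbb$. When $\mu = \Nbb$, Lemma~\ref{Lemma_Orthogonal_TI_Sequence} yields an orthogonal TI-sequence $\{u_n\}_{n\in\Nbb}\subset \Pcal_1(\Ghat)$. For every $p\in \beta\Nbb$, the element $\plim_n u_n \in \VN^*$ is a limit point of the net in the $w^*$-topology, so by Lemma~\ref{Lemma P is Dense in M} it is a mean, and since $\{u_n\}$ is a TI-net it is topologically invariant. Lemma~\ref{Lemma Orthogonal Implies Injection} says $p \mapsto \plim_n u_n$ is injective from $\beta\Nbb$ into $\TIM(\Ghat)$, giving $|\TIM(\Ghat)| \geq |\beta\Nbb| = 2^{2^\Nbb} = 2^{2^\mu}$.

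When $\mu > \Nbb$, Lemma~\ref{Lemma_Orthogonal_TI_Net} furnishes an orthogonal TI-net $\{u_\gamma\}_{\gamma\in\Gamma}\subset\Pcal_1(\Ghat)$ with $|\Gamma| = \mu$, and by the remark at the end of \ref{KK Small U and H} each tail of $\Gamma$ also has cardinality $\mu$. Restrict attention to the ultrafilters $p \in \Gamma^*$, i.e.\ those containing every tail of $\Gamma$; for such $p$, the point $\plim_\gamma u_\gamma$ is a genuine accumulation point of the TI-net and hence lies in $\TIM(\Ghat)$ by the same reasoning as above. Lemma~\ref{Lemma Orthogonal Implies Injection} again gives injectivity of $p\mapsto \plim_\gamma u_\gamma$ on all of $\beta\Gamma$, so in particular on $\Gamma^*$. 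Combining with Lemma~\ref{Lemma Cardinality of Gamma*}, $|\TIM(\Ghat)| \geq |\Gamma^*| = 2^{2^\mu}$.

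Combining the two cases with the upper bound yields $|\TIM(\Ghat)| = 2^{2^\mu}$. The only point requiring any care is verifying that ultrafilter limits of an orthogonal TI-net produce \emph{distinct} invariant means; but this is exactly what Lemma~\ref{Lemma Orthogonal Implies Injection} guarantees, and the heavy lifting of actually constructing the orthogonal TI-net was done in Lemmas~\ref{Lemma_Orthogonal_TI_Sequence} and~\ref{Lemma_Orthogonal_TI_Net} via the Kakutani--Kodaira compact-subgroup machinery of \ref{KK Small U and H}.
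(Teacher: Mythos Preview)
Your argument follows the paper's approach exactly: invoke Lemma~\ref{Lemma_Orthogonal_TI_Sequence} or Lemma~\ref{Lemma_Orthogonal_TI_Net} to get an orthogonal TI-net, apply Lemma~\ref{Lemma Orthogonal Implies Injection} for injectivity, and count via Lemma~\ref{Lemma Cardinality of Gamma*}, with the upper bound from Corollary~\ref{Corollary Upper Bound on TIM Ghat}.

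There is one small slip in the $\mu=\Nbb$ case. You claim that for \emph{every} $p\in\beta\Nbb$ the limit $\plim_n u_n$ lies in $\TIM(\Ghat)$, but if $p$ is the principal ultrafilter at $k$ then $\plim_n u_n = u_k$, which is merely a normal state and typically not topologically invariant (indeed $\|v u_k - u_k\|$ need not vanish for fixed $k$). The fix is immediate: restrict to the nonprincipal ultrafilters $\Nbb^* = \beta\Nbb\setminus\Nbb$, which are precisely those containing every tail, so that $\plim_n u_n$ is a genuine net-limit and hence lies in $\TIM(\Ghat)$. Since $|\Nbb^*| = 2^{2^\Nbb}$ this changes nothing in the cardinality count. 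The paper handles both cases uniformly by always working with $\Gamma^*$ rather than $\beta\Gamma$, which avoids this issue.
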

\begin{proof}
Let $\{u_\gamma\}_{\gamma\in\Gamma}$ be the orthogonal TI-net of of Lemma~\ref{Lemma_Orthogonal_TI_Sequence} or Lemma~\ref{Lemma_Orthogonal_TI_Net}, depending on $\mu$.
\\By Lemma~\ref{Lemma Orthogonal Implies Injection}, $p\mapsto \plim_\gamma u_\gamma$ is an injection of $\Gamma^*$ into $\TIM(\Ghat)$.
\\Thus $|\TIM(\Ghat)| \geq |\Gamma^*| = 2^{2^\mu}$ by Lemma~\ref{Lemma Cardinality of Gamma*}.
The opposite inequality is Corollary~\ref{Corollary Upper Bound on TIM Ghat}.
\end{proof}
\bibliographystyle{alpha}
\bibliography{enumeratingTim}
\end{document}